\documentclass{amsart}
\usepackage{amsmath,amssymb,amsthm}
\sloppy

\newtheorem{thr}{Theorem}[section]
\newtheorem{lem}[thr]{Lemma}
\newtheorem{cor}[thr]{Corollary}

\newtheorem{claim}[thr]{Claim}
\newtheorem{con}[thr]{Conjecture}

\theoremstyle{definition}

\theoremstyle{remark}
\newtheorem{remr}[thr]{Remark}

\numberwithin{equation}{section}

\def\Mat{\operatorname{Mat}}

\begin{document}

\title[A matrix ring with commuting graph of maximal diameter]{A matrix ring with commuting \\ graph of maximal diameter}

\author{Yaroslav Shitov}
\address{National Research University Higher School of Economics, 20 Myasnitskaya Ulitsa, Moscow 101000, Russia}
\email{yaroslav-shitov@yandex.ru}

\subjclass[2000]{05C50, 12F05}
\keywords{Commuting graph, field extension, matrix theory}

\begin{abstract}
The commuting graph of a semigroup is the set of non-central elements;
the edges are defined as pairs $(u,v)$ satisfying $uv=vu$. We provide
an example of a field $\mathcal{F}$ and an integer $n$ such that the
commuting graph of $\operatorname{Mat}_n(\mathcal{F})$ has maximal
possible diameter, equal to six.
\end{abstract}

\maketitle

\section{Introduction}

The study of commuting graphs of different algebraic structures has attracted considerable attention in recent publications. There are a number of interesting results connecting this area of research with functional analysis~\cite{ABKM}, ring theory~\cite{AGHM}, semigroup theory~\cite{AMK}, and some other branches of mathematics. A significant number of publications are devoted to commuting graphs arising from group theory~\cite{AAM} and linear algebra~\cite{AMRP}. One of the interesting recent results was obtained in~\cite{GiPa}, where the following group-theoretical problem has been solved: What is the maximal possible diameter of the commuting graph of a finite group? Our paper deals with the linear algebraic version of this question. Let $\mathbb{F}$ be a field and $\Mat_n(\mathbb{F})$ be the algebra of $n\times n$ matrices over $\mathbb{F}$. We denote the commuting graph of $\Mat_n(\mathbb{F})$ by $\Gamma(\mathbb{F},n)$. That is, the vertices of $\Gamma(\mathbb{F},n)$ are non-scalar matrices, and the edges are defined as pairs $(U,V)$ satisfying $UV=VU$. S. Akbari, A. Mohammadian, H. Radjavi, and P. Raja proved in 2006 that the distances in commuting graphs of matrix algebras cannot exceed six, and they proposed the following conjecture.

\begin{con}\cite[Conjecture~5]{AMRP}\label{ConjConn5}
If $\Gamma(\mathbb{F},n)$ is a connected graph, then its diameter does not exceed five.
\end{con}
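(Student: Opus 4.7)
The plan is to try to bound $\operatorname{diam} \Gamma(\mathbb{F},n)$ by $5$ whenever this graph is connected. Given two non-scalar matrices $A$ and $B$, I would seek a commuting path of length at most $5$ by first routing each endpoint to a ``standard'' matrix at distance at most $2$, and then checking that any two standard matrices lie at distance at most $1$.

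For the routing step, the natural dichotomy is based on whether the $\mathbb{F}$-algebra $\mathbb{F}[A]$ equals the centralizer $C(A)$ or is strictly contained in it. In the strict-inclusion case the minimal polynomial of $A$ has a proper $\mathbb{F}$-factor, so $A$ admits a non-trivial $\mathbb{F}$-invariant subspace $W$. The idempotent projector $E_W$ onto $W$ along an $A$-invariant complement is a rich common neighbour: it commutes with $A$, and it also commutes with every rank-$1$ matrix supported on $W$. These rank-$1$ (or, more generally, reducible block-diagonal) matrices would play the role of ``standard'' connectors; two of them with overlapping column or row spaces share a common diagonalizer, so the cluster of standard matrices should have diameter $1$ or $2$ in $\Gamma(\mathbb{F},n)$.

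The hard part will be the case where $\mathbb{F}[A] = C(A)$ is a maximal subfield of $\operatorname{Mat}_n(\mathbb{F})$, i.e., a degree-$n$ field extension realized inside the matrix algebra. Here every neighbour of $A$ lies in $\mathbb{F}[A]$, so to escape this subfield one must first locate a non-scalar $Y \in \mathbb{F}[A]$ whose own centralizer is strictly larger than $\mathbb{F}[A]$; such a $Y$ must have a reducible minimal polynomial over $\mathbb{F}$. Thus the ability to bridge two different maximal subfields through their common ``reducible'' elements depends on the factorization behaviour of elements of the extension $\mathbb{F}[A]/\mathbb{F}$ when reinterpreted as $\mathbb{F}$-linear operators. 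Quantifying this detour is the genuine obstacle: while the Akbari--Mohammadian--Radjavi--Raja argument caps the cost at $6$, squeezing it to $5$ appears to demand a uniform arithmetic statement about the poset of maximal subfields of $\operatorname{Mat}_n(\mathbb{F})$.

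I would therefore attack this arithmetic statement by classifying pairs $(K_1,K_2)$ of maximal subfields of $\operatorname{Mat}_n(\mathbb{F})$ according to the dimension and structure of $K_1 \cap K_2$, and trying to show that for any such pair one can always find a chain $K_1 \ni Y_1 \sim Z \sim Y_2 \in K_2$ with $Z$ outside both subfields. Whether such a uniform chain always exists is precisely where I expect the argument to be fragile; if it fails for some exotic base field, the conjectured bound of $5$ must be replaced by $6$, and the failure site would pinpoint a candidate counterexample.
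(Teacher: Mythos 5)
The statement you are trying to prove is not a theorem of the paper --- it is Conjecture~5 of Akbari, Mohammadian, Radjavi and Raja, quoted by the paper precisely in order to \emph{disprove} it. The entire content of the paper is a counterexample: a field $\mathcal{F}$ and the integer $n=3p$ (with $p\geqslant 73$ prime) such that $\Gamma(\mathcal{F},3p)$ is connected but has diameter six. Consequently no proof of the diameter-five bound can succeed, and your plan must break down somewhere; your own closing sentence anticipates this, but the proposal as written still aims at establishing the bound, so it has an irreparable gap rather than a fixable one.

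It is worth noting that the place you single out as the ``genuine obstacle'' is exactly where the paper makes the conjecture fail. The field $\mathcal{F}$ is built (via Zorn's lemma) as a maximal separable extension of $\operatorname{Quot}\mathbb{F}_3[[x_{11},\ldots,x_{3p\,3p},y,z]]$ over which a certain degree-$p$ polynomial $\varphi$ stays irreducible. This forces every finite extension of $\mathcal{F}$ to have degree a power of $3$ or a multiple of $p$, which guarantees connectivity of $\Gamma(\mathcal{F},3p)$, while the polynomial $\psi(\tau)=\varphi(\tau^3)$ is irreducible of degree $3p$ and the extension $\mathcal{F}\subset\mathcal{F}(\theta)$ has a \emph{unique} proper intermediate field, namely $\mathcal{F}(\theta^3)$. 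Translated into matrices: for $\Phi$ with characteristic polynomial $\psi$ one has $\mathcal{C}(\Phi)=\mathcal{F}[\Phi]$, and the only elements of this maximal subfield whose centralizer is strictly larger lie in $\mathcal{F}[\Phi^3]$, so every path out of $\mathcal{C}(\Phi)$ is forced through a single bottleneck similar to $\mathcal{G}=\operatorname{diag}(G,G,G)$. The paper then shows, by a transcendence-degree count over $\mathbb{F}_3(y,z)$ using the generic conjugating matrix $X=(x_{ij})$, that $\mathcal{G}$ and $X^{-1}\mathcal{G}X$ are at distance at least four, whence $\Phi$ and $X^{-1}\Phi X$ are at distance six. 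So the uniform chain between two maximal subfields that your argument requires simply does not exist over this $\mathcal{F}$, and the ``candidate counterexample'' you gesture at is exactly the object the paper constructs.
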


This conjecture is known to be true in the following cases: $n=4$ or $n$ is prime~\cite{Dolz2016, ABM}; $\mathbb{F}$ is algebraically closed~\cite{AMRP} or real closed~\cite{Miguel1}; $\mathbb{F}$ is finite and $n$ is not a square of a prime~\cite{Dolz2016}. The set of matrix pairs realizing the maximal distance is well understood in the case of algebraically closed field~\cite{DKO}. It was proved in~\cite{GutGut} that $\Gamma(\mathbb{F}_2,9)$ is connected and has diameter at least five, showing that Conjecture~\ref{ConjConn5} cannot be strengthened. Conjecture~\ref{ConjConn5} has also been extensively investigated in the case when $\mathbb{F}$ is a more general structure than a field. This conjecture is known to be true when $\mathbb{F}$ is equal to $\mathbb{Z}/m$ with non-prime $m$ or taken from a sufficiently general class of semirings which includes the \textit{tropical semiring}~\cite{DBO, DoOb, non-prime}.

Conjecture~\ref{ConjConn5} has remained open until now, and our paper aims to provide a counterexample for this conjecture. In Section~2, we construct a field over which Conjecture~\ref{ConjConn5} fails. In Section~3, we construct a pair of matrices which realize the maximal distance, equal to six, in the corresponding commuting graph. In Section~4, we conclude the paper by proving a technical claim used in our argument.






\section{The field}

Let us recall some notation. An integral domain $R$ is called a \textit{gcd domain} if any two non-zero elements have a greatest common divisor. An element $v$ in some extension of $R$ is called \textit{integral over $R$} if $v$ is a root of a monic polynomial with coefficients in $R$. A ring $R$ is called \textit{local} if the set $J=J(R)$ of all non-units is an ideal of $R$. A local ring $R$ is \textit{Henselian} if it satisfies the condition of \textit{Hensel's lemma}. That is, for every monic polynomial $f\in R[t]$ whose image $\overline{f}$ in $R[t]/J[t]$ factors into a product $\overline{g_1}\overline{g_2}$ in which $\overline{g_1}$ and $\overline{g_2}$ are both monic and relatively prime, there exist monic polynomials $g_1,g_2\in R[t]$ which are relatively prime and satisfy $f=g_1g_2$ and $\overline{g_i}=g_i+J[t]$.

Let us fix a prime number $p\geqslant 73$. We work with the power series ring $\mathcal{R}=\mathbb{F}_3[[x_{11},x_{12},\ldots,x_{3p\,3p},y,z]]$, which is a gcd domain~\cite[Corollary 3.3]{Buch} and also a local Henselian ring~\cite{San}. The ideal $J(\mathcal{R})$ consists of series with zero constant term, so we have a natural isomorphism between $\mathbb{F}_3$ and $\mathcal{R}/J(\mathcal{R})$. We denote by $\mathcal{Q}$ the field of fractions $\operatorname{Quot}\mathcal{R}$ and by $\mathcal{H}$ the algebraic closure of $\mathcal{Q}$. (We do not provide an explicit construction of $\mathcal{H}$, but we refer the reader to~\cite{Poon} for related questions.) We consider a polynomial $\widetilde{\varphi}=t^p+a_{p-1}t^{p-1}+\ldots+a_1t+a_0$ which is irreducible over $\mathbb{F}_3$, and we define
$$\varphi=t^p+a_{p-1}t^{p-1}+\ldots+a_2t^2+(a_1+y)t+(a_0+z)\in\mathcal{R}[t],$$
where $t$ is an indeterminate, and coefficients lie in $\mathcal{R}$.
The polynomial  $\overline{\varphi}$ equals $\widetilde{\varphi}$ up to the above mentioned isomorphism between $\mathbb{F}_3$ and $\mathcal{R}/J(\mathcal{R})$, so $\overline{\varphi}$ is irreducible over $\mathcal{R}/J(\mathcal{R})$. Therefore, $\varphi$ is irreducible over $\mathcal{R}$ and, since Gauss' lemma holds in gcd domains~\cite[IV.4, Theorem 4.6]{MRR}, $\varphi$ is irreducible over $\mathcal{Q}$ as well.



\begin{lem}\label{LemExten}
Let $\Phi$ be the set of all subfields $F\subset\mathcal{H}$ such that (1) $F$ is a separable extension of $\mathcal{Q}$ and (2) $\varphi$ is irreducible over $F$. Then $\Phi$ has a maximal field $\mathcal{F}$. 
\end{lem}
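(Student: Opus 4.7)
The natural approach is Zorn's lemma applied to $\Phi$ ordered by inclusion. First I would check that $\Phi$ is nonempty: the field $\mathcal{Q}$ itself is a (trivially) separable extension of $\mathcal{Q}$, and by the final sentence of the section preceding the lemma, $\varphi$ is irreducible over $\mathcal{Q}$, so $\mathcal{Q}\in\Phi$.

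Next I would verify that every chain in $\Phi$ has an upper bound in $\Phi$. Given a chain $\{F_i\}_{i\in I}\subset\Phi$, set $F=\bigcup_{i\in I} F_i$. Standard arguments show $F$ is a subfield of $\mathcal{H}$. Separability of $F$ over $\mathcal{Q}$ is immediate: any $\alpha\in F$ lies in some $F_i$, which is separable over $\mathcal{Q}$, so $\alpha$ is separable over $\mathcal{Q}$; hence condition (1) holds for $F$.

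The slightly more delicate point is condition (2), irreducibility of $\varphi$ over $F$. Here I would argue by contradiction: suppose $\varphi=gh$ with $g,h\in F[t]$ monic and of degree strictly between $0$ and $p$. Each of $g$ and $h$ has only finitely many coefficients. Since the $F_i$ form a chain, every such coefficient lies in some $F_i$, and taking the largest $F_i$ among the finitely many needed we obtain a single index $j$ with $g,h\in F_j[t]$. This contradicts $\varphi$ being irreducible over $F_j\in\Phi$. Therefore $\varphi$ is irreducible over $F$, so $F\in\Phi$ and is an upper bound of the chain.

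Having verified the hypotheses, Zorn's lemma delivers a maximal element $\mathcal{F}\in\Phi$. No step looks like a serious obstacle; the only mild subtlety is the finiteness argument used to push a putative factorization over $F$ down to a single $F_j$ in the chain, which is what allows condition (2) to be preserved under unions.
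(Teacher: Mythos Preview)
Your argument is correct and matches the paper's proof essentially line for line: both verify separability of the union elementwise, both push a hypothetical factorization down to a single member of the chain via the finitely many coefficients, and both conclude by Zorn's lemma. Your version is slightly more explicit in checking that $\Phi$ is nonempty (via $\mathcal{Q}\in\Phi$), which the paper omits.
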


\begin{proof}
Let $\mathcal{F}'=\{\mathcal{F}_i\}$ be a totally ordered subset of $\Phi$. 
Every element of $\mathcal{F}_0=\cup\mathcal{F}'$ is separable over $\mathcal{Q}$, so $\mathcal{F}_0$ is a separable extension. Assuming $\varphi=\varphi_1\varphi_2$ over $\mathcal{F}_0$, we enumerate the coefficients involved in $\varphi_1$ and $\varphi_2$ as $c_1,\ldots,c_k$, and every $c_j$ lies in some field $F_j\in\mathcal{F}'$. Since $\varphi$ is irreducible over $F_1\cup\ldots\cup F_k\in\mathcal{F}'$, we see that either $\varphi_1$ or $\varphi_2$ is constant, which means that $\varphi$ is irreducible over $\mathcal{F}_0$. Application of Zorn's lemma completes the proof.
\end{proof}

In what follows, $\mathcal{F}$ denotes the field constructed in the previous lemma. 

\begin{lem}\label{LemIntElem2}
Any non-trivial separable extension of $\mathcal{F}$ contains a root of $\varphi$.
\end{lem}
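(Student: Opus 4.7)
The plan is to pin down the Galois structure of the splitting field of $\varphi$ over $\mathcal{F}$ and then invoke the maximality of $\mathcal{F}$. I expect the splitting field to be cyclic of prime degree $p$, so that the Galois lattice between $\mathcal{F}$ and the splitting field consists only of $\mathcal{F}$ and the splitting field itself; at that point the maximality of $\mathcal{F}$ does the rest.

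First I would pass from $\mathcal{R}$ to $\mathcal{R}' = \mathbb{F}_{3^p}[[x_{11},\ldots,x_{3p\,3p},y,z]]$, which is still a power series ring over a field and therefore remains a gcd domain and a local Henselian ring by the results already cited. Over its residue field $\mathbb{F}_{3^p}$, the reduction $\overline{\varphi} = \widetilde{\varphi}$ (separable, since $p\neq 3$) splits into $p$ distinct linear factors, so iterating Hensel's lemma yields $p$ distinct roots of $\varphi$ inside $\mathcal{R}'$. Taking in parallel a lift $\psi\in\mathcal{R}[t]$ of an irreducible degree-$p$ polynomial over $\mathbb{F}_3$ and fixing a root $\zeta\in\mathcal{H}$, the same Hensel argument combined with Gauss' lemma shows that $\mathcal{Q}(\zeta)$ has degree $p$ over $\mathcal{Q}$ and contains every root of both $\psi$ and $\varphi$. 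For any root $\beta$ of $\varphi$ one then has $\mathcal{Q}(\beta)\subset\mathcal{Q}(\zeta)$ with both sides of degree $p$ over $\mathcal{Q}$, so they coincide, and $\mathcal{Q}(\beta)/\mathcal{Q}$ is Galois and cyclic of order $p$.

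Passing to $\mathcal{F}$, the compositum $\mathcal{F}(\beta)$ is Galois over $\mathcal{F}$ with group $\operatorname{Gal}(\mathcal{Q}(\beta)/\mathcal{F}\cap\mathcal{Q}(\beta))$; since $\beta\notin\mathcal{F}$ (because $\varphi$ is irreducible over $\mathcal{F}$) and $[\mathcal{Q}(\beta):\mathcal{Q}]$ is prime, the intersection must be $\mathcal{Q}$, and so $\mathcal{F}(\beta)/\mathcal{F}$ is cyclic of order $p$ with no intermediate subfields. Now let $L\supsetneq\mathcal{F}$ be any non-trivial separable algebraic extension, embedded into $\mathcal{H}$ over $\mathcal{F}$; the intersection $L\cap\mathcal{F}(\beta)$ must then equal either $\mathcal{F}$ or $\mathcal{F}(\beta)$. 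If it equalled $\mathcal{F}$, the linear-disjointness identity $\operatorname{Gal}(L\mathcal{F}(\beta)/L)\cong\operatorname{Gal}(\mathcal{F}(\beta)/\mathcal{F})$ would force $[L(\beta):L]=p$, so that $\varphi$ would remain the minimal polynomial of $\beta$ over $L$ and still be irreducible, contradicting the maximality of $\mathcal{F}$ in $\Phi$ (note that $L/\mathcal{Q}$ is separable because $L/\mathcal{F}$ and $\mathcal{F}/\mathcal{Q}$ are). Hence $L\supset\mathcal{F}(\beta)$, and in particular $\beta\in L$.

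The main technical obstacle is the Hensel-lifting step: one must verify that the natural unramified extension of $\mathcal{R}$ is itself Henselian and that Hensel's lemma really does absorb every root of $\varphi$ into a degree-$p$ extension of $\mathcal{Q}$. Once the splitting field is identified as cyclic of prime order, the Galois-theoretic conclusion is essentially forced.
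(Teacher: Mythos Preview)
Your argument is correct and takes a genuinely different route from the paper's. The paper proceeds \emph{locally}: given a non-trivial separable extension $F\supsetneq\mathcal{F}$, maximality of $\mathcal{F}$ forces a non-trivial factorization $\varphi=\varphi_1\varphi_2$ over $F$; the coefficients $c_1,\ldots,c_k$ of the $\varphi_i$ are integral over $\mathcal{R}$, and the paper invokes the commutative-algebra facts that $R'=\mathcal{R}[c_1,\ldots,c_k]$ is again local and Henselian. Since $\overline{\varphi}$ is then reducible over the residue field $R'/J(R')$, and since the only intermediate field between $\mathbb{F}_3$ and the splitting field of $\widetilde{\varphi}$ is the splitting field itself, $\overline{\varphi}$ in fact splits there, and a single Hensel lift produces a root of $\varphi$ inside $R'\subset F$. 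You instead establish a \emph{global} structural fact first---that the splitting field of $\varphi$ over $\mathcal{Q}$ is cyclic of degree $p$, via Hensel in the explicit power-series ring $\mathbb{F}_{3^p}[[\ldots]]\cong\mathcal{R}[\zeta]$---and then finish with the compositum/linear-disjointness formalism together with maximality of $\mathcal{F}$. The paper's argument is shorter but leans on the non-trivial results that finite integral extensions of local Henselian rings remain local and Henselian; your route avoids these, trades them for standard Galois theory, and yields the bonus that $\mathcal{F}(\beta)/\mathcal{F}$ is already the full splitting field. Two cosmetic remarks: your auxiliary polynomial collides in name with the paper's $\psi=\varphi(\tau^3)$, and you do not actually need a ``lift''---the minimal polynomial of a generator of $\mathbb{F}_{3^p}$ already lies in $\mathbb{F}_3[t]\subset\mathcal{R}[t]$.
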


\begin{proof}
Let $F$ be such an extension. Since the field $\mathcal{H}$ is algebraically closed, we can assume without loss of generality that $F\subset \mathcal{H}$. By Lemma~\ref{LemExten}, there are non-constant monic polynomials $\varphi_1,\varphi_2\in F[t]$ such that $\varphi=\varphi_1\varphi_2$. We enumerate by $c_1,\ldots,c_k$ the coefficients appearing in $\varphi_1$ and $\varphi_2$; note that $c_1,\ldots,c_k$ are integral over $\mathcal{R}$ because they are sums of products of roots of $\varphi$. 

The ring $R'=\mathcal{R}[c_1,\ldots,c_k]$ 
is local~\cite[II.7, Ex. 7.7]{FuchsSalce} and Henselian~\cite[III.4, Ex. 4c]{Bour}. Since $\varphi$ is reducible over $R'$, the polynomial $\overline{\varphi}$ is reducible over $R'/J(R')$. Since finite extensions of finite fields are cyclic, the splitting field of $\overline{\varphi}$ has degree $p$ over $\mathcal{R}/J(\mathcal{R})\cong\mathbb{F}_3$. So we see that $\overline{\varphi}$ splits (and, in particular, has a root $u$) over $R'/J(R')$. By Hensel's lemma, there exists $\rho\in R'$ such that $\overline{\rho}=u$ and $\varphi(\rho)=0$.
\end{proof}

Recall that, for every algebraic field extension $K\supset L$, the set of all elements $l\in K$ that are separable over $L$ forms the field $K_{sep}$, which is the unique separable extension of $L$ over which $K$ is purely inseparable~\cite{Lang}. The degree of the extension $K\supset K_{sep}$ is called the \textit{inseparable degree} of $K\supset L$ and is denoted by $[K:L]_i$. The following theorem is an essential step in the proof of our main result.

\begin{thr}\label{LemIntExt}\label{CorIntExt}\label{ThrCon}
The degree of any finite extension $E\supset\mathcal{F}$ is either a power of $3$ or a multiple of $p$. If the degree of $E$ is a multiple of $p$, then $E$ contains a subfield $E'$ which has degree $p$ over $\mathcal{F}$.
\end{thr}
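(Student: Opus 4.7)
The plan is to pass from $E$ to its separable part over $\mathcal{F}$ and then invoke Lemma~\ref{LemIntElem2}. Let $E_{sep}$ denote the separable closure of $\mathcal{F}$ inside $E$, so that $E/E_{sep}$ is purely inseparable. Since $\mathcal{R}=\mathbb{F}_3[[x_{11},\ldots,z]]$ has characteristic $3$, so does $\mathcal{F}$, and therefore $[E:E_{sep}]$ is a power of $3$. The entire question is now reduced to analysing $[E_{sep}:\mathcal{F}]$.

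I would then split into two cases. If $E_{sep}=\mathcal{F}$, then $[E:\mathcal{F}]=[E:E_{sep}]$ is a power of $3$ and the first assertion holds. Otherwise, $E_{sep}/\mathcal{F}$ is a non-trivial \emph{algebraic} separable extension (algebraic because $E/\mathcal{F}$ is finite), so Lemma~\ref{LemIntElem2} yields a root $\rho\in E_{sep}$ of $\varphi$. Irreducibility of $\varphi$ over $\mathcal{F}$ (established just after Lemma~\ref{LemExten}) forces $\varphi$ to be the minimal polynomial of $\rho$, whence $[\mathcal{F}(\rho):\mathcal{F}]=p$; multiplicativity of degrees then gives $p\mid [E_{sep}:\mathcal{F}]$ and in particular $p\mid [E:\mathcal{F}]$, proving the dichotomy. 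For the subfield assertion: if $p\mid [E:\mathcal{F}]$ then, since $\gcd(3,p)=1$, we cannot be in the first case, so we are in the second case and may take $E':=\mathcal{F}(\rho)\subset E_{sep}\subset E$, which has degree $p$ over $\mathcal{F}$.

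I expect no serious obstacle here: the heavy lifting has been done in Lemma~\ref{LemIntElem2}, and the only technical point is to verify its hypothesis for the extension $E_{sep}/\mathcal{F}$, which requires only the observation that this extension is algebraic (so it embeds in $\mathcal{H}$ as in the proof of Lemma~\ref{LemIntElem2}) and non-trivial. The conceptual step is recognising that the separable/inseparable decomposition, combined with the characteristic being $3$, is exactly the right way to package Lemma~\ref{LemIntElem2} so that the degree dichotomy falls out immediately.
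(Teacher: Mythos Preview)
Your proposal is correct and follows essentially the same route as the paper: pass to the separable closure $E_{sep}$ of $\mathcal{F}$ in $E$, note that $[E:E_{sep}]$ is a power of $3$, and if $E_{sep}\neq\mathcal{F}$ invoke Lemma~\ref{LemIntElem2} to produce a root $\rho$ of $\varphi$ and set $E'=\mathcal{F}(\rho)$. Your write-up is simply more explicit than the paper's about why irreducibility of $\varphi$ gives $[\mathcal{F}(\rho):\mathcal{F}]=p$ and why $p\mid[E:\mathcal{F}]$ forces $E_{sep}\neq\mathcal{F}$ via $\gcd(3,p)=1$.
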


\begin{proof}
The degree of any purely inseparable extension is a power of the characteristic, so we can assume that $E_{sep}\neq\mathcal{F}$. By Lemma~\ref{LemIntElem2}, the polynomial $\varphi$ has a root $\xi\in E_{sep}$, and then $E'=\mathcal{F}(\xi)$ has degree $p$ over $\mathcal{F}$.
\end{proof}



By Theorem~6 of~\cite{ABM}, the second assertion of Theorem~\ref{CorIntExt} is a sufficient condition for the graph $\Gamma(\mathcal{F},3p)$ to be connected. Let us recall that the equality $[K_1:K_3]_i=[K_1:K_2]_i[K_2:K_3]_i$ holds for any tower $K_1\supset K_2\supset K_3$ of algebraic extensions~\cite{Lang}. We denote by $\psi(\tau)$ the polynomial $\varphi(\tau^3)$ and by $\theta$ a root of $\psi$.

\begin{remr}\label{LemNew}
Since every root of $\psi$ has multiplicity three, the only monic separable polynomial of degree at least $p$ that divides $\psi$ is $\sqrt[3]{\psi}$. Note that the elements $\sqrt[3]{y}$ and $\sqrt[3]{z}$ belong to any field containing the coefficients of $\sqrt[3]{\psi}$.
\end{remr}

\begin{lem}\label{LemIrreduc}
The polynomial $\psi$ is irreducible over $\mathcal{F}$.
\end{lem}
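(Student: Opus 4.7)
The plan is to assume $\psi$ factors nontrivially over $\mathcal{F}$ and derive that $\sqrt[3]{y}\in\mathcal{F}$, contradicting the separability of $\mathcal{F}/\mathcal{Q}$.

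First, I will fix a root $\theta\in\mathcal{H}$ of $\psi$ and set $\alpha=\theta^{3}$; then $\alpha$ is a root of $\varphi$, and since $\mathcal{F}$ was chosen so that $\varphi$ remains irreducible, $[\mathcal{F}(\alpha):\mathcal{F}]=p$. Because the characteristic is $3$, the element $\theta$ satisfies $\tau^{3}-\alpha=(\tau-\theta)^{3}$ over $\mathcal{F}(\alpha)$, so $[\mathcal{F}(\theta):\mathcal{F}(\alpha)]\in\{1,3\}$ and hence $[\mathcal{F}(\theta):\mathcal{F}]\in\{p,3p\}$. Proving $\psi$ irreducible is thus the same as proving $\theta\notin\mathcal{F}(\alpha)$.

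Next, I will suppose for contradiction that $\theta\in\mathcal{F}(\alpha)$. Then $\mathcal{F}(\theta)=\mathcal{F}(\alpha)$ is separable over $\mathcal{F}$, so the minimal polynomial $\mu$ of $\theta$ over $\mathcal{F}$ is a monic separable divisor of $\psi$ of degree exactly $p$. Remark~\ref{LemNew} then forces $\mu=\sqrt[3]{\psi}$, so the coefficients of $\sqrt[3]{\psi}$ lie in $\mathcal{F}$, and the second sentence of that remark yields $\sqrt[3]{y}\in\mathcal{F}$.

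The last step will be to rule out $\sqrt[3]{y}\in\mathcal{F}$. The $y$-adic order on the power series ring $\mathcal{R}$ is a valuation (the quotient $\mathcal{R}/(y)$ is a domain), and it extends to $\mathcal{Q}$ with $v(y)=1$; in particular, $y$ is not a cube in $\mathcal{Q}$, because any cube has valuation divisible by $3$. Consequently, the minimal polynomial of $\sqrt[3]{y}$ over $\mathcal{Q}$ is the purely inseparable $\tau^{3}-y$, and separability of $\mathcal{F}/\mathcal{Q}$ forces $\sqrt[3]{y}\notin\mathcal{F}$, the desired contradiction.

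The main obstacle is the structural observation in the second paragraph: once one sees that reducibility of $\psi$ pins its separable factor to the unique candidate $\sqrt[3]{\psi}$, and thereby drags the inseparable element $\sqrt[3]{y}$ into $\mathcal{F}$, what remains is a short Galois-theoretic reduction together with a one-line valuation computation.
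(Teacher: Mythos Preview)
Your proof is correct and shares the paper's core mechanism: a separable factor of $\psi$ of degree at least $p$ must, by Remark~\ref{LemNew}, coincide with $\sqrt[3]{\psi}$, which drags $\sqrt[3]{y}$ into $\mathcal{F}$ and contradicts the separability of $\mathcal{F}/\mathcal{Q}$. The organization differs: the paper runs a case analysis on an irreducible factor $\psi_1$ of degree $d$ (the inseparable case and the separable case $d<p$ both reduce, via $\psi_1=\varphi_1(\tau^3)$ or $\psi_1^3=\varphi_1(\tau^3)$, to a proper factor of $\varphi$, while the separable case $d\geqslant p$ invokes the Remark), whereas you compute $[\mathcal{F}(\theta):\mathcal{F}]\in\{p,3p\}$ directly from the tower $\mathcal{F}\subset\mathcal{F}(\theta^{3})\subset\mathcal{F}(\theta)$ and then rule out the value $p$ via the Remark. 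Your route is a little slicker---it bypasses the $d<p$ case entirely---and your explicit $y$-adic valuation argument (showing $y$ is not a cube in $\mathcal{Q}$) fills in a step the paper leaves to the reader.
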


\begin{proof}
Assume that $\psi$ is divisible by a polynomial $\psi_1\in\mathcal{F}[\tau]$ of degree $d\notin\{0,3p\}$. If $\psi_1=\varphi_1(\tau^3)$ for some $\varphi_1\in\mathcal{F}[t]$, then $\varphi$ is divisible by $\varphi_1$, a contradiction. In the rest of our proof, we assume without loss of generality that $\psi_1$ is irreducible; if $\psi_1$ is inseparable, then it satisfies the assumption of the previous sentence. If $\psi_1$ is separable and $d<p$, then the polynomial $\psi_2=\psi_1^3$ in turn satisfies the assumption of the second sentence.

Finally, if $\psi_1$ is separable and $d\geqslant p$, then $\sqrt[3]{y}\in\mathcal{F}$ by Remark~\ref{LemNew}. The definition of $\mathcal{F}$ implies that $\sqrt[3]{y}$ is separable over $\mathcal{Q}$, and this is a contradiction.
\end{proof}

\begin{lem}\label{LemNoIntExt}
Let $\theta$ be a root of $\psi$ and $K$ a field. If $\mathcal{F}\varsubsetneq  K\varsubsetneq \mathcal{F}(\theta)$, then $K=\mathcal{F}(\theta^3)$.
\end{lem}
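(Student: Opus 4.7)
The plan is to analyze $[K:\mathcal{F}]$ by degree considerations, handle the easy case, and rule out the hard one by exploiting Remark~\ref{LemNew}. By Lemma~\ref{LemIrreduc} we have $[\mathcal{F}(\theta):\mathcal{F}]=3p$, and the tower $\mathcal{F}\subset\mathcal{F}(\theta^3)\subset\mathcal{F}(\theta)$ has degrees $p$ and $3$: the first step is separable because $\varphi$ has degree $p$ coprime to the characteristic~$3$ and so cannot be a polynomial in $t^3$, and the second is purely inseparable because $\tau^3-\theta^3=(\tau-\theta)^3$ in characteristic~$3$. Thus $\mathcal{F}(\theta^3)$ is the separable closure of $\mathcal{F}$ in $\mathcal{F}(\theta)$, and $[K:\mathcal{F}]$ divides $3p$ and equals $3$ or $p$.

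If $[K:\mathcal{F}]=p$, I would let $K_s$ be the separable closure of $\mathcal{F}$ in $K$; then $K_s\subset K\cap\mathcal{F}(\theta^3)$ and $[K_s:\mathcal{F}]$ divides $p$. The case $K_s=\mathcal{F}$ would make $K/\mathcal{F}$ purely inseparable of degree $p$, which is impossible because purely inseparable degrees in characteristic $3$ are powers of $3$ while $p\geq 73$. Hence $K_s=\mathcal{F}(\theta^3)$ and $K=\mathcal{F}(\theta^3)$ by degree. If instead $[K:\mathcal{F}]=3$, the same argument with $\gcd(3,p)=1$ forces $K/\mathcal{F}$ purely inseparable of degree $3$, so $K=\mathcal{F}(\sqrt[3]{c})$ for some $c\in\mathcal{F}\setminus\mathcal{F}^3$, and I need to rule this out.

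For this, I would consider the minimal polynomial of $\theta$ over $K$, which has degree $[\mathcal{F}(\theta):K]=p$. An irreducible inseparable polynomial in characteristic $3$ must be a polynomial in $\tau^3$ and hence have degree divisible by $3$, so this minimal polynomial is separable; being a monic separable degree-$p$ divisor of $\psi$, it equals $\sqrt[3]{\psi}$ by Remark~\ref{LemNew}. The same remark then gives $\sqrt[3]{y},\sqrt[3]{z}\in K$. Since $\mathcal{F}/\mathcal{Q}$ is separable while $\sqrt[3]{y}$ and $\sqrt[3]{z}$ are inseparable over $\mathcal{Q}$, neither lies in $\mathcal{F}$, so $\mathcal{F}(\sqrt[3]{y})$ and $\mathcal{F}(\sqrt[3]{z})$ each have degree $3$ over $\mathcal{F}$; being contained in the cubic extension $K$, both equal $K$, and I obtain $\mathcal{F}(\sqrt[3]{y})=\mathcal{F}(\sqrt[3]{z})$.

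The hard part will be refuting this last equality. My plan uses two standard structural facts: first, any separable algebraic extension is linearly disjoint from any purely inseparable extension of the common base, whence $[\mathcal{F}(\sqrt[3]{y},\sqrt[3]{z}):\mathcal{F}]=[\mathcal{Q}(\sqrt[3]{y},\sqrt[3]{z}):\mathcal{Q}]$; second, the indeterminates $y$ and $z$ form part of a $3$-basis of $\mathcal{Q}$ over $\mathcal{Q}^3$, since $\mathcal{Q}$ is the fraction field of a power-series ring over the perfect field $\mathbb{F}_3$. The second fact makes $1,y,y^2,z$ linearly independent over $\mathcal{Q}^3$, forcing $\sqrt[3]{z}\notin\mathcal{Q}(\sqrt[3]{y})$ and $[\mathcal{Q}(\sqrt[3]{y},\sqrt[3]{z}):\mathcal{Q}]=9$. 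Combined with the first, this gives $[\mathcal{F}(\sqrt[3]{y},\sqrt[3]{z}):\mathcal{F}]=9$, contradicting $\mathcal{F}(\sqrt[3]{y})=\mathcal{F}(\sqrt[3]{z})$ and completing the proof.
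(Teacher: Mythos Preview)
Your proof is correct and follows essentially the same route as the paper's: split into the cases $[K:\mathcal{F}]=p$ and $[K:\mathcal{F}]=3$, identify the degree-$p$ case with the separable closure $\mathcal{F}(\theta^3)$, and in the degree-$3$ case use Remark~\ref{LemNew} to force $\sqrt[3]{y},\sqrt[3]{z}\in K$, then derive a contradiction from $[\mathcal{Q}(\sqrt[3]{y},\sqrt[3]{z}):\mathcal{Q}]=9$ together with the separability of $\mathcal{F}/\mathcal{Q}$. The only cosmetic difference is that the paper packages the last step as $[K:\mathcal{F}]_i\geq 9$ via multiplicativity of inseparable degree, whereas you phrase it through linear disjointness of separable and purely inseparable extensions; your detour through $K=\mathcal{F}(\sqrt[3]{c})$ and $\mathcal{F}(\sqrt[3]{y})=\mathcal{F}(\sqrt[3]{z})$ is unnecessary but harmless.
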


\begin{proof}
Since $\psi$ is irreducible, we get $[\mathcal{F}(\theta):\mathcal{F}]=3p$. If $[K:\mathcal{F}]=p$, then $K$ is the field of all separable elements of $\mathcal{F}(\theta)$. In this case, $K=\mathcal{F}(\theta^3)$.

If $[K:\mathcal{F}]=3$, then $[\mathcal{F}(\theta) : K]=p$. In this case, $\theta$ is a root of an irreducible polynomial $\chi\in K[t]$ of degree $p$, and $\chi$ is separable. By Remark~\ref{LemNew}, we have $\chi=\sqrt[3]{\psi}$ which implies $\sqrt[3]{y},\sqrt[3]{z}\in K$. Note that the extension $\mathcal{Q}(\sqrt[3]{y},\sqrt[3]{z})\supset \mathcal{Q}$ has inseparability degree nine, which means that $[K:\mathcal{F}]_i\geq9$ because $\mathcal{F}\supset \mathcal{Q}$ is a separable extension. Therefore, $[K:\mathcal{F}]\neq 3$.
\end{proof}

\section{The matrices}
Let us recall a standard result we need in the rest of our paper. We denote by $\mathcal{C}(A)$ the \textit{centralizer} of a matrix $A\in\Mat_n(\mathbb{F})$, that is, the set of all matrices that commute with $A$. Clearly, $\mathcal{C}(A)$ is an $\mathbb{F}$-linear subspace of $\Mat_n(\mathbb{F})$, so we can speak of the dimension of $\mathcal{C}(A)$. A standard result of matrix theory states that there exists a nonsingular matrix $Q\in\Mat_n(\mathbb{F})$ such that $Q^{-1}AQ$ has \textit{rational normal form}. In other words, we have $Q^{-1}AQ=\operatorname{diag}[L(f_1),L(f_2),...,L(f_k)]$, where $L(f)$ denotes the companion matrix of a polynomial $f$, and the polynomials $f_i$ satisfy $f_{i+1}|f_i$ for all $i$. These polynomials $f_i$ are called the \textit{invariant factors} of $A$.

\begin{thr}\label{LemCla6}\cite[VIII.2, Theorem 2]{Gantmacher}
The centralizer of a matrix $A\in\Mat_{n}(\mathbb{F})$ has dimension $n_1+3n_{2}+\ldots+(2k-1) n_{k}$, where $n_1\geqslant\ldots\geqslant n_k$ are the degrees of the invariant factors of $A$.
\end{thr}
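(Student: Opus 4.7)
The plan is to reduce to rational canonical form and then analyze the centralizer block by block using the module-theoretic description of companion matrices.

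First, I would observe that conjugation by a fixed invertible matrix $Q$ is an $\mathbb{F}$-linear isomorphism between $\mathcal{C}(A)$ and $\mathcal{C}(Q^{-1}AQ)$, so without loss of generality $A=\operatorname{diag}[L(f_1),\ldots,L(f_k)]$ is already in rational canonical form, with $\deg f_i = n_i$ and $f_{i+1}\mid f_i$. Writing a candidate matrix $B$ in block form $(B_{ij})_{1\le i,j\le k}$ with $B_{ij}$ of size $n_i\times n_j$, the equation $AB=BA$ splits into the $k^2$ intertwining equations $L(f_i)\,B_{ij}=B_{ij}\,L(f_j)$. Hence $\dim\mathcal{C}(A)=\sum_{i,j}\dim\mathcal{I}(f_i,f_j)$, where $\mathcal{I}(f,g)$ denotes the space of $\deg(f)\times\deg(g)$ matrices $X$ satisfying $L(f)X=XL(g)$.

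The central computation is the identity $\dim\mathcal{I}(f,g)=\deg\gcd(f,g)$. To prove it, I would identify $\mathbb{F}^{\deg f}$ with the $\mathbb{F}[t]$-module $\mathbb{F}[t]/(f)$, under which $L(f)$ becomes multiplication by $t$, and similarly for $g$. The intertwining condition says that $X$ is an $\mathbb{F}[t]$-module homomorphism $\mathbb{F}[t]/(g)\to\mathbb{F}[t]/(f)$. Any such map is determined by the image $\bar h$ of $1$, subject to $g\bar h=0$ in $\mathbb{F}[t]/(f)$, that is, $f\mid gh$, equivalently $h$ is a multiple of $f/\gcd(f,g)$ modulo $f$. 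The multiples of $f/\gcd(f,g)$ modulo $f$ form an $\mathbb{F}$-subspace of dimension $\deg\gcd(f,g)$, giving the claim.

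Finally I would invoke the divisibility $f_{i+1}\mid f_i$ to conclude $\gcd(f_i,f_j)=f_{\max(i,j)}$, so that $\dim\mathcal{I}(f_i,f_j)=n_{\max(i,j)}$. Grouping the sum by the value $m=\max(i,j)$ and counting the $2m-1$ ordered pairs with this maximum,
\[
\dim\mathcal{C}(A)=\sum_{i,j=1}^{k} n_{\max(i,j)}=\sum_{m=1}^{k}(2m-1)\,n_m = n_1+3n_2+\ldots+(2k-1)n_k,
\]
as required. The only step that is not bookkeeping is the dimension count for $\mathcal{I}(f,g)$; the main obstacle there is resisting the temptation of a direct matrix calculation and instead passing to the quotient-ring picture, where the answer is immediate.
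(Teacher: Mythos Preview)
Your argument is correct. Note, however, that the paper does not actually prove this theorem: it is stated with a citation to Gantmacher \cite[VIII.2, Theorem~2]{Gantmacher} and used as a black box. So there is no ``paper's own proof'' to compare against; you have simply supplied a clean proof of a result the paper takes for granted.

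For what it is worth, Gantmacher's original argument also reduces to rational normal form and analyzes the blocks $B_{ij}$ intertwining $L(f_i)$ and $L(f_j)$, but carries out the dimension count for $\mathcal{I}(f,g)$ by an explicit matrix calculation rather than by the $\mathbb{F}[t]$-module identification $\mathcal{I}(f,g)\cong\operatorname{Hom}_{\mathbb{F}[t]}(\mathbb{F}[t]/(g),\mathbb{F}[t]/(f))$. Your module-theoretic shortcut is the standard modern way to avoid that computation, and it gives exactly the same answer $\deg\gcd(f,g)$.
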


\begin{cor}\cite[VIII.2, Corollary 1]{Gantmacher}\label{ThrHJ}
If the characteristic polynomial of $A\in\Mat_n(\mathbb{F})$ is irreducible, then $\mathcal{C}(A)=\mathbb{F}[A]$.
\end{cor}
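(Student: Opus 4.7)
The plan is to deduce this directly from Theorem~\ref{LemCla6} by pinning down the invariant factors. Since the invariant factors $f_1,\ldots,f_k$ of $A$ satisfy $f_{i+1}\mid f_i$ and $\prod_i f_i$ equals the characteristic polynomial of $A$, the irreducibility hypothesis forces $k=1$ and $f_1$ to be the full characteristic polynomial of degree $n$. In particular, $f_1$ is also the minimal polynomial of $A$, so $\mathbb{F}[A]$ has $\mathbb{F}$-dimension exactly $n$.

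Next I would apply Theorem~\ref{LemCla6}: with $k=1$ and $n_1=n$, the centralizer dimension formula gives $\dim_{\mathbb{F}}\mathcal{C}(A)=n_1=n$. Since $\mathbb{F}[A]$ always lies inside $\mathcal{C}(A)$ (powers of $A$ commute with $A$) and both subspaces have the same finite dimension $n$, equality follows.

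The only potential subtlety is the standard fact that the largest invariant factor coincides with the minimal polynomial; I would invoke this without detailed justification, as it is part of the same basic theory of rational canonical form cited from Gantmacher for Theorem~\ref{LemCla6}. Beyond that, no genuine obstacle arises, and the whole argument is essentially a one-line dimension count.
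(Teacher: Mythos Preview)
Your argument is correct: irreducibility of the characteristic polynomial forces a single invariant factor of degree $n$, whence $\dim\mathcal{C}(A)=n$ by Theorem~\ref{LemCla6}, and since $\mathbb{F}[A]\subset\mathcal{C}(A)$ also has dimension $n$ (the minimal polynomial equals the characteristic polynomial), equality follows. The paper itself gives no proof of this corollary---it simply cites Gantmacher---so there is nothing to compare against; your deduction from Theorem~\ref{LemCla6} is exactly the intended one-line consequence.
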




\begin{claim}\label{ClaDisF2}
Let $\mathcal{G}=\operatorname{diag}(G,G,G)$ be the block-diagonal matrix, where $G$ denotes the companion matrix of $\varphi$. If $X\in\Mat_{3p}(\mathcal{F})$ is the matrix whose $(i,j)$th entry equals $x_{ij}$, then  the distance between $\mathcal{G}$ and $X^{-1}\mathcal{G}X$ in $\Gamma(\mathcal{F},3p)$ is at least four.
\end{claim}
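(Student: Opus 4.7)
The plan is to translate the distance problem into one about intersections of centralizer algebras. Since $\varphi$ is irreducible over $\mathcal{F}$, Corollary~\ref{ThrHJ} identifies $\mathcal{C}(G)$ with the field $E := \mathcal{F}[G] \cong \mathcal{F}(\xi)$ of degree $p$, and the block-diagonal structure of $\mathcal{G}$ yields $\mathcal{C}(\mathcal{G}) = \Mat_3(E)$ as a subalgebra of $\Mat_{3p}(\mathcal{F})$; accordingly $\mathcal{C}(X^{-1}\mathcal{G}X) = X^{-1}\Mat_3(E)X$. A path $\mathcal{G} - M_1 - M_2 - X^{-1}\mathcal{G}X$ of length at most three in the commuting graph corresponds to a pair of non-scalar matrices $A = M_1 \in \Mat_3(E)$ and $B = M_2 \in X^{-1}\Mat_3(E)X$ satisfying $AB = BA$; shorter paths are degenerate cases in which $M_1 = M_2$ or $M_1 \in \{\mathcal{G}, X^{-1}\mathcal{G}X\}$. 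The goal is to show that no such pair of non-scalar matrices exists.

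The plan is to classify $A$ by its rational canonical form as an element of $\Mat_3(E)$. First, if $A = cI_3$ with $c \in E \setminus \mathcal{F}$, then $\mathcal{F}[c] = E$ (because $[E:\mathcal{F}] = p$ is prime), so the centralizer of $A$ inside $\Mat_{3p}(\mathcal{F})$ is again all of $\Mat_3(E)$; this reduces the problem to the non-intersection statement $\Mat_3(E) \cap X^{-1}\Mat_3(E)X = \mathcal{F} I$. Second, if $A$ is regular in $\Mat_3(E)$ — its minimal polynomial over $E$ has degree three and coincides with the characteristic polynomial — then a dimension count via Theorem~\ref{LemCla6} identifies $\mathcal{C}(A)$ in $\Mat_{3p}(\mathcal{F})$ with $E[A] \subseteq \Mat_3(E)$, and the same reduction applies. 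The remaining cases are those in which the minimal polynomial of $A$ over $E$ has degree two; here the invariant factors of $A$ viewed as a $3p \times 3p$ matrix over $\mathcal{F}$ can be much larger or more varied, and $\mathcal{C}(A)$ need not be contained in $\Mat_3(E)$. For each such canonical form (for instance $A = \operatorname{diag}(a, a, b)$ with distinct $a, b \in E$, which in the case $a, b \in E \setminus \mathcal{F}$ turns out to be conjugate to $\mathcal{G}$ itself over $\mathcal{F}$), the centralizer $\mathcal{C}(A)$ must be computed explicitly through Theorem~\ref{LemCla6} and its intersection with $X^{-1}\Mat_3(E)X$ analyzed directly.

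The common concluding step in every case is an intersection statement of the form ``a specified centralizer meets $X^{-1}\Mat_3(E)X$ only in scalar matrices'', and this is where the main obstacle lies. The argument will exploit the fact that the entries $x_{ij}$ of $X$ are algebraically independent transcendentals over $\mathbb{F}_3$ (inherited from the power series ring $\mathcal{R}$), and in particular independent of the elements $y, z$ through which the coefficients of $\varphi$, and hence the field $E$, are defined. A putative non-scalar element of the intersection would produce a matrix identity $XCX^{-1} = D$ with $C, D$ lying in explicitly described subalgebras; expanding entry by entry would yield polynomial relations among the $x_{ij}$ with coefficients in a subfield of bounded transcendence degree, contradicting the algebraic independence of the $x_{ij}$ unless $C$ is forced to be scalar. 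Making this transcendence argument precise within each subcase of the canonical form of $A$ — and verifying that the extracted polynomial relations are genuinely non-trivial — is the technical crux of the proof.
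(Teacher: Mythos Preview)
Your setup is correct---$\mathcal{C}(\mathcal{G}) \cong \Mat_3(E)$ with $E=\mathcal{F}[G]$, and the endgame must exploit the algebraic independence of the $x_{ij}$---but the case analysis contains an error and the decisive quantitative step is missing. In your ``regular'' case, the claim that $\mathcal{C}(A)$ in $\Mat_{3p}(\mathcal{F})$ equals $E[A]$ is false: take $A=\operatorname{diag}(0,1,-1)\in\Mat_3(E)$, which is regular over $E$ but, viewed as a $3p\times 3p$ matrix over $\mathcal{F}$, is $\operatorname{diag}(0_p,I_p,-I_p)$ with $p$ equal invariant factors and hence $\dim\mathcal{C}(A)=3p^2$. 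Theorem~\ref{LemCla6} refers to invariant factors over $\mathcal{F}$, not over $E$, and these can differ drastically; regularity over $E$ only pins down the centralizer \emph{inside} $\Mat_3(E)$. Thus Case~2 does not reduce to the intersection $\Mat_3(E)\cap X^{-1}\Mat_3(E)X$, and the set of $A$ for which $\mathcal{C}(A)\not\subset\Mat_3(E)$ is much larger than the degree-two minimal polynomials you isolate in Case~3.

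More seriously, ``expand entry by entry to extract polynomial relations'' is not sharp enough once $\dim\mathcal{C}(A)$ is of order $p^2$: you must quantify how many of the $9p^2$ transcendentals in $X$ a putative relation actually constrains. The paper replaces your case split by a single inequality (Lemma~\ref{LemCla1}, Corollary~\ref{CorCla1}): if $S$ commutes with $X^{-1}TX$ then $\operatorname{trdeg}(X)\leq\operatorname{trdeg}(S)+\operatorname{trdeg}(T)+\dim\mathcal{C}(S)+\dim\mathcal{C}(T)$. Since $S,T\in\mathcal{C}(\mathcal{G})$ forces $\operatorname{trdeg}(S),\operatorname{trdeg}(T)\leq 9p$, a length-three path gives $\dim\mathcal{C}(S)+\dim\mathcal{C}(T)>8p^2$, so both exceed $3p^2$. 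A short structural argument (Lemma~\ref{LemCla2}) then replaces $S,T$ by rank-$p$ idempotents or square-zero matrices $M_i$; a second application of Lemma~\ref{LemCla1}, combined with the bound $\operatorname{trdeg}\leq 3.75p^2$ for rank-$p$ matrices in $\mathcal{C}(M_i)$ (Lemma~\ref{LemCla5}), yields $0.25p^2\leq 18p$, contradicting $p\geq 73$. This transcendence-degree bookkeeping is the missing idea; without a comparable inequality your scheme has no mechanism to close the argument uniformly across the many possible shapes of $\mathcal{C}(A)$.
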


Before we start proving Claim~\ref{ClaDisF2}, we deduce the main result from it.

\begin{thr}\label{ThrDistSix}
Let $\Psi$ be an $\mathcal{F}$-linear operator on $\mathcal{F}^{3p}$ with characteristic polynomial $\psi$, and denote by $\Phi\in\Mat_{3p}(\mathcal{F})$ the matrix of $\Psi$ written with respect to a basis such that $\Psi^3$ has rational normal form. Then the distance between $\Phi$ and $X^{-1}\Phi X$ in $\Gamma(\mathcal{F},3p)$ is at least six.
\end{thr}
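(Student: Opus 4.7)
The plan is to argue by contradiction: suppose there is a walk $\Phi = A_0, A_1, \ldots, A_\ell = X^{-1}\Phi X$ in $\Gamma(\mathcal{F}, 3p)$ with $\ell \leq 5$, and produce a walk of length at most three from $\mathcal{G} = \Phi^3$ to $X^{-1}\mathcal{G}X$, contradicting Claim~\ref{ClaDisF2}. Since $\psi$ is irreducible (Lemma~\ref{LemIrreduc}), Corollary~\ref{ThrHJ} gives $\mathcal{C}(\Phi) = \mathcal{F}[\Phi]$, a field of degree $3p$ whose only proper intermediate extension of $\mathcal{F}$ is $\mathcal{F}[\mathcal{G}]$ by Lemma~\ref{LemNoIntExt}. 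Each non-scalar $a \in \mathcal{F}[\Phi]$ therefore satisfies a dichotomy: either $\mathcal{F}[a] = \mathcal{F}[\Phi]$ and $\mathcal{C}(a) = \mathcal{F}[\Phi]$, or $a \in \mathcal{F}[\mathcal{G}]$ with $\mathcal{F}[a] = \mathcal{F}[\mathcal{G}]$ (by primality of $p$) and hence $\mathcal{C}(a) = \mathcal{C}(\mathcal{G})$; the symmetric statement holds inside $\mathcal{F}[X^{-1}\Phi X]$.

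Set $i_0$ to be the largest index with $A_{i_0} \in \mathcal{F}[\Phi]$ and $j_0$ the smallest with $A_{j_0} \in \mathcal{F}[X^{-1}\Phi X]$. Since $A_1$ commutes with $\Phi$ and $A_{\ell-1}$ with $X^{-1}\Phi X$, while neither of $\Phi, X^{-1}\Phi X$ commutes with the other (otherwise the cubes $\mathcal{G}, X^{-1}\mathcal{G}X$ would commute, violating Claim~\ref{ClaDisF2}), one finds $1 \leq i_0, j_0 \leq \ell - 1$. The pivotal claim is $j_0 \geq i_0 + 1$: otherwise $A_{j_0}$ lies in $\mathcal{F}[\Phi] \cap \mathcal{F}[X^{-1}\mathcal{G}X]$, and since the latter has degree $p$ the element $A_{j_0}$ cannot be primitive in $\mathcal{F}[\Phi]$, hence falls into $\mathcal{F}[\mathcal{G}] \cap \mathcal{F}[X^{-1}\mathcal{G}X]$. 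Both of these are subfields of prime degree $p$, and they are distinct (equality would give $X^{-1}\mathcal{G}X \in \mathcal{F}[\mathcal{G}]$, once more contradicting Claim~\ref{ClaDisF2}), so their intersection is $\mathcal{F}$, forcing $A_{j_0}$ to be scalar and hence not a vertex of $\Gamma$, a contradiction.

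With $j_0 \geq i_0 + 1$ in hand, the dichotomy forces $A_{i_0} \in \mathcal{F}[\mathcal{G}] \setminus \mathcal{F}$, so $\mathcal{C}(A_{i_0}) = \mathcal{C}(\mathcal{G})$ and $A_{i_0+1}$ commutes with $\mathcal{G}$; symmetrically $A_{j_0-1}$ commutes with $X^{-1}\mathcal{G}X$. When $j_0 \geq i_0 + 2$ the walk
\[
  \mathcal{G},\; A_{i_0+1},\; A_{i_0+2},\; \ldots,\; A_{j_0-1},\; X^{-1}\mathcal{G}X
\]
has length $j_0 - i_0 \leq \ell - 2 \leq 3$; when $j_0 = i_0 + 1$, the single vertex $A_{i_0+1} = A_{j_0}$ commutes with both $\mathcal{G}$ and $X^{-1}\mathcal{G}X$, giving a walk $\mathcal{G},\; A_{i_0+1},\; X^{-1}\mathcal{G}X$ of length $2$. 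Either way the resulting walk lives in $\Gamma(\mathcal{F}, 3p)$ and has length at most three, contradicting Claim~\ref{ClaDisF2}. I expect the delicate step to be establishing the intersection identity $\mathcal{F}[\mathcal{G}] \cap \mathcal{F}[X^{-1}\mathcal{G}X] = \mathcal{F}$ that yields $j_0 \geq i_0 + 1$; this is precisely the place where the primality of $p$ and the conclusion of Claim~\ref{ClaDisF2} must combine.
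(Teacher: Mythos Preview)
Your approach is essentially the paper's: use Corollary~\ref{ThrHJ} and Lemma~\ref{LemNoIntExt} to show that the neighbours of $\Phi$ (resp.\ $X^{-1}\Phi X$) that can lead anywhere new are exactly the non-scalar elements of $\mathcal{F}[\Phi^3]=\mathcal{F}[\mathcal{G}]$ (resp.\ $\mathcal{F}[X^{-1}\mathcal{G}X]$), all of which share the centralizer of $\mathcal{G}$ (resp.\ $X^{-1}\mathcal{G}X$); then strip one step off each end and invoke Claim~\ref{ClaDisF2}. The paper does this in one line by replacing $M_1$ with $\Phi^3$ and $M_4$ with $X^{-1}\Phi^3 X$ without loss of generality.

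Your more elaborate bookkeeping with $i_0,j_0$ contains a genuine gap, precisely at what you call the pivotal claim. With $i_0=\max\{i:A_i\in\mathcal{F}[\Phi]\}$ and $j_0=\min\{i:A_i\in\mathcal{F}[X^{-1}\Phi X]\}$, the implication ``$j_0\le i_0\Rightarrow A_{j_0}\in\mathcal{F}[\Phi]$'' is unjustified: the set $\{i:A_i\in\mathcal{F}[\Phi]\}$ need not be an initial segment of $\{0,\dots,\ell\}$, because a walk can exit $\mathcal{F}[\Phi]$ through an element of $\mathcal{F}[\mathcal{G}]$ and later re-enter it (any non-scalar element of $\mathcal{F}[\mathcal{G}]$ is central in $\mathcal{C}(\mathcal{G})$, so it commutes with matrices outside $\mathcal{F}[\Phi]$ on both sides). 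Thus from $j_0\le i_0$ you cannot conclude $A_{j_0}\in\mathcal{F}[\Phi]$, and the intersection argument $\mathcal{F}[\mathcal{G}]\cap\mathcal{F}[X^{-1}\mathcal{G}X]=\mathcal{F}$ (which is itself correct) is never reached.

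The fix is easy and keeps your argument intact: redefine $i_0$ as the largest index with $A_0,\dots,A_{i_0}\in\mathcal{F}[\Phi]$ and $j_0$ as the smallest index with $A_{j_0},\dots,A_\ell\in\mathcal{F}[X^{-1}\Phi X]$. Then $j_0\le i_0$ genuinely forces $A_{j_0}\in\mathcal{F}[\Phi]\cap\mathcal{F}[X^{-1}\Phi X]$, and since this intersection is a subfield of $\mathcal{F}[\Phi]$ which (by Lemma~\ref{LemNoIntExt} and Claim~\ref{ClaDisF2}) can be neither $\mathcal{F}[\Phi]$ nor $\mathcal{F}[\mathcal{G}]$, it equals $\mathcal{F}$ and $A_{j_0}$ is scalar, the desired contradiction. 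Alternatively, simply take the walk to be a shortest path; then one checks directly that $i_0=1$ and $j_0=\ell-1$, which is exactly the paper's reduction.
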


\begin{proof}
Suppose
$\Phi\leftrightarrow M_1\leftrightarrow M_2\leftrightarrow M_3\leftrightarrow M_4\leftrightarrow X^{-1}\Phi X$ is a path in $\Gamma(\mathcal{F},3p)$.
Corollary~\ref{ThrHJ} implies $\mathcal{C}(\Phi)=\mathcal{F}[\Phi]$; since $\psi$ is irreducible, $\mathcal{F}[\Phi]$ is isomorphic to $\mathcal{F}(\theta)$ as a field. By Lemma~\ref{LemNoIntExt}, the only matrices in $\mathcal{F}[\Phi]$ whose centralizer is larger are those in $\mathcal{F}[\Phi^3]$; moreover, all non-scalar matrices in $\mathcal{F}[\Phi^3]$ are polynomials in each other which means that these matrices have the same centralizer. Therefore, we can assume without loss of generality that $M_1=\Phi^3$ and $M_4=X^{-1}\Phi^3 X$. The minimal polynomial of $\Phi^3$ is $\varphi$, so $M_1$ is the block-diagonal matrix with three blocks equal to the companion matrix of $\varphi$. The rest follows from Claim~\ref{ClaDisF2}.
\end{proof}

\section{Proof of Claim~\ref{ClaDisF2}}

This is a final section which is intended to prove Claim~\ref{ClaDisF2}.
Let $A$ be a matrix over some extension of a field $\mathbb{F}$; we denote by $\operatorname{trdeg}(A,\mathbb{F})$ the transcendence degree of the field extension obtained from $\mathbb{F}$ by adjoining the entries of $A$. In the special case when $A$ is a matrix over $\mathcal{F}$, we write simply $\operatorname{trdeg}(A)$ instead of $\operatorname{trdeg}(A,\mathbb{F}_3(y,z))$. In particular, we have $\operatorname{trdeg}(X)=9p^2$ and $\operatorname{trdeg}(\mathcal{G})=0$ for the matrices defined in Claim~\ref{ClaDisF2}.

\begin{lem}\label{LemCla1}
Let $\mathbb{F}'$ be an extension of $\mathbb{F}$. If matrices $A,B,C$ over $\mathbb{F}'$ satisfy $B=C^{-1}AC$, then $\operatorname{trdeg}(C,\mathbb{F})\leqslant\operatorname{trdeg}(A,\mathbb{F})+\operatorname{trdeg}(B,\mathbb{F})+\dim\mathcal{C}(A)$.
\end{lem}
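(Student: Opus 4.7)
The plan is to bound $\operatorname{trdeg}(C,\mathbb{F})$ by separating the contributions from $A$, from $B$, and from the ambiguity that remains in $C$ once $A$ and $B$ are fixed. First I let $K$ denote the subfield of $\mathbb{F}'$ generated over $\mathbb{F}$ by the entries of $A$ and $B$; directly from the definitions,
$$\operatorname{trdeg}(K/\mathbb{F})\;\leq\;\operatorname{trdeg}(A,\mathbb{F})+\operatorname{trdeg}(B,\mathbb{F}).$$
The relation $B=C^{-1}AC$ rewrites as $AC=CB$, exhibiting $C$ as a solution to a homogeneous linear system whose coefficients lie in $K$.

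Next I consider the $K$-linear endomorphism $T$ of $\Mat_n(K)$ defined by $T(Y)=AY-YB$, and set $V=\ker T$. The goal of this step is to show $\dim_K V=\dim\mathcal{C}(A)$. Because $T$ is defined over $K$, its kernel has the same dimension before and after extension of scalars, so $\dim_K V$ coincides with the $\mathbb{F}'$-dimension of $\{Y\in\Mat_n(\mathbb{F}'):AY=YB\}$. Multiplying $B=C^{-1}AC$ on the right by $C^{-1}$ yields $BC^{-1}=C^{-1}A$, which shows that the map $Y\mapsto YC^{-1}$ is an $\mathbb{F}'$-linear bijection from this solution space onto $\mathcal{C}(A)$. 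This proves the dimension equality.

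Finally I fix a $K$-basis $v_1,\dots,v_d$ of $V$ with $d=\dim\mathcal{C}(A)$. Since $C$ lies in the $\mathbb{F}'$-span of $v_1,\dots,v_d$, there are unique $\lambda_1,\dots,\lambda_d\in\mathbb{F}'$ with $C=\lambda_1 v_1+\dots+\lambda_d v_d$, so $K(C)=K(\lambda_1,\dots,\lambda_d)$ has transcendence degree at most $d$ over $K$. Combining this with the earlier bound on $\operatorname{trdeg}(K/\mathbb{F})$ via additivity of transcendence degree in towers produces the inequality asserted in the lemma. The only step requiring genuine care is the dimension equality $\dim_K V=\dim\mathcal{C}(A)$, which rests on invariance of kernel dimension under base change together with the single twist by $C^{-1}$; beyond that, the proof is just transcendence-degree bookkeeping, so I do not anticipate any further obstacle.
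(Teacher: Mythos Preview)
Your argument is correct. Both your proof and the paper's hinge on the same underlying fact: the space of intertwiners $\{Y:AY=YB\}$ has dimension $\dim\mathcal{C}(A)$ and is already defined over the field $K$ generated by the entries of $A$ and $B$. The difference is in how this is established. The paper passes to the \emph{algebraic closure} of $K$ and invokes the Jordan normal form to produce an explicit matrix $J$ over that closure with $J^{-1}BJ=A$; then $CJ\in\mathcal{C}(A)$, and one unwinds. You instead stay over $K$ itself (no closure needed), observe that the kernel of $Y\mapsto AY-YB$ has the same dimension before and after extending scalars to $\mathbb{F}'$, and compute that $\mathbb{F}'$-dimension via the bijection $Y\mapsto YC^{-1}$ onto $\mathcal{C}(A)$. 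Your route is a bit more elementary---it uses only rank invariance under field extension rather than Jordan theory---while the paper's route has the mild advantage of exhibiting a concrete conjugator. Either way the transcendence-degree bookkeeping that follows is identical.
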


\begin{proof}
Let us denote by $K$ the algebraic closure of the field generated by entries of $A$ and $B$. Applying the Jordan normal form theorem, we see that there is a $J\in\Mat_{n}(K)$ such that $J^{-1}BJ=A$. Since $CJ\in \mathcal{C}(A)$, the entries of $CJ$ can be written as $K$-linear functions of $c=\dim\mathcal{C}(A)$ elements of $\mathbb{F}'$. Thus, the entries of $C$ are algebraic in these $c$ elements and the entries of $J$. The entries of $J$ are in turn algebraic in the entries of $A$ and $B$, so we have $\operatorname{trdeg}(C,\mathbb{F})\leqslant\operatorname{trdeg}(A,\mathbb{F})+\operatorname{trdeg}(B,\mathbb{F})+c$.
\end{proof}

\begin{cor}\label{CorCla1}
Assume $\mathbb{F}\subset \mathbb{F}'$ and matrices $A,C,D$ over $\mathbb{F}'$ satisfy $DC^{-1}AC=C^{-1}ACD$. Then $\operatorname{trdeg}(C,\mathbb{F})\leqslant\operatorname{trdeg}(A,\mathbb{F})+\operatorname{trdeg}(D,\mathbb{F})+\dim\mathcal{C}(A)+\dim\mathcal{C}(D)$.
\end{cor}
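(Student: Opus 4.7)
The plan is to use Lemma~\ref{LemCla1} together with a transcendence bound arising from the commutation with $D$. Set $B=C^{-1}AC$. The hypothesis $DC^{-1}AC=C^{-1}ACD$ says exactly that $B\in\mathcal{C}(D)$. Lemma~\ref{LemCla1} already gives
$$\operatorname{trdeg}(C,\mathbb{F})\leqslant\operatorname{trdeg}(A,\mathbb{F})+\operatorname{trdeg}(B,\mathbb{F})+\dim\mathcal{C}(A),$$
so the whole problem reduces to showing
$$\operatorname{trdeg}(B,\mathbb{F})\leqslant\operatorname{trdeg}(D,\mathbb{F})+\dim\mathcal{C}(D).$$

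To establish this, I would argue in the same spirit as the proof of Lemma~\ref{LemCla1}. Let $L$ denote the algebraic closure of the field generated over $\mathbb{F}$ by the entries of $D$. Over $L$, the centralizer $\mathcal{C}(D)$ (viewed inside $\Mat_n(L)$) is an $L$-linear subspace of dimension $\dim\mathcal{C}(D)$; choosing any basis of it consisting of matrices with entries in $L$, the matrix $B$ can be written as a linear combination of these basis matrices with $\dim\mathcal{C}(D)$ coefficients $\lambda_1,\ldots,\lambda_c\in\mathbb{F}'$. Consequently the entries of $B$ are algebraic over $\mathbb{F}$ in the entries of $D$ together with the scalars $\lambda_1,\ldots,\lambda_c$, which yields the displayed inequality.

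Combining the two bounds gives
$$\operatorname{trdeg}(C,\mathbb{F})\leqslant\operatorname{trdeg}(A,\mathbb{F})+\operatorname{trdeg}(D,\mathbb{F})+\dim\mathcal{C}(A)+\dim\mathcal{C}(D),$$
as required.

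The only place that requires care is the choice of the basis of $\mathcal{C}(D)$: one must select it inside $\Mat_n(L)$ so that its matrix entries really are algebraic over the entries of $D$, rather than over the (possibly much larger) field of entries of $B$. This is a standard observation, since the solution space of the linear system $XD=DX$ admits a basis defined by rational formulas in the entries of $D$, so no real obstacle arises.
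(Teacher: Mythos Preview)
Your proof is correct and follows essentially the same route as the paper: set $B=C^{-1}AC$, observe $B\in\mathcal{C}(D)$, bound $\operatorname{trdeg}(B,\mathbb{F})$ by $\operatorname{trdeg}(D,\mathbb{F})+\dim\mathcal{C}(D)$ via a basis of the centralizer defined over the field generated by the entries of $D$, and then plug into Lemma~\ref{LemCla1}. The only cosmetic difference is that the paper works over the field $K=\mathbb{F}(\text{entries of }D)$ itself rather than its algebraic closure, which is harmless since the centralizer dimension is invariant under field extension.
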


\begin{proof}
Let $K$ be the field obtained from $\mathbb{F}$ by adjoining the entries of $D$. Since the matrix $B=C^{-1}AC$ belongs to $\mathcal{C}(D)$, the entries of $B$ can be written as $K$-linear functions of $c=\dim\mathcal{C}(D)$ elements of $\mathbb{F}'$. We get $\operatorname{trdeg}(B,\mathbb{F})\leqslant \operatorname{trdeg}(D,\mathbb{F})+\dim\mathcal{C}(D)$, and now it suffices to substitute this into the right-hand side of the inequality of Lemma~\ref{LemCla1}.
\end{proof}

\begin{lem}\label{ObsCla1}
Every matrix $A\in\Mat_{n}(\mathcal{F})$ of rank $r$ satisfies $\operatorname{trdeg}(A)\leqslant 2nr-r^2$.
\end{lem}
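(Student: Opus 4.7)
The plan is to realize the classical fact that $n \times n$ matrices of rank at most $r$ form a variety of dimension $2nr - r^2$, and then translate this into the stated bound via an explicit Schur-complement parametrization of $A$ by $2nr - r^2$ of its own entries.

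First, because $A$ has rank $r$, at least one $r \times r$ minor of $A$ is nonzero. Simultaneous permutations of rows and columns merely relabel the entries of $A$ and therefore leave $\operatorname{trdeg}(A)$ unchanged, so I may assume that the top-left $r \times r$ block $A_{11}$ in the block decomposition
$$A = \begin{pmatrix} A_{11} & A_{12} \\ A_{21} & A_{22} \end{pmatrix}$$
is invertible. The rank hypothesis then forces the last $n-r$ rows of $A$ to lie in the row span of the first $r$ rows, which yields the Schur-complement identity $A_{22} = A_{21} A_{11}^{-1} A_{12}$.

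Consequently every entry of $A$ is a rational function over $\mathbb{F}_3(y,z)$ of the entries of the three blocks $A_{11}$, $A_{12}$, $A_{21}$, which together contain $r^2 + r(n-r) + (n-r)r = 2nr - r^2$ elements. Hence the transcendence degree of the field obtained by adjoining the entries of $A$ to $\mathbb{F}_3(y,z)$ is at most $2nr - r^2$, as claimed.

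The argument is essentially the elementary dimension count for the determinantal variety, so I do not expect any serious obstacle; the only point of care is verifying that row and column permutations do not change the transcendence degree, which is immediate since such permutations only relabel the entries.
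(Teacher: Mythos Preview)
Your proof is correct. The paper takes a slightly different route: it writes $A = BC$ with $B \in \mathcal{F}^{n \times r}$ and $C \in \mathcal{F}^{r \times n}$, then uses the gauge freedom $(B,C) \mapsto (BD, D^{-1}C)$ to normalize $C$ so that some $r \times r$ submatrix is the identity, leaving $nr + (rn - r^2) = 2nr - r^2$ free parameters generating the entries of $A$. Your Schur-complement argument instead parametrizes $A$ directly by $2nr - r^2$ of its own entries. The two viewpoints are dual --- in the paper's normalization the columns of $A$ indexed by the identity block recover $B$, and the normalized $C$ is essentially $(I \mid A_{11}^{-1}A_{12})$ --- but yours has the small advantage of never introducing auxiliary matrices. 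One minor wording point: to bring an arbitrary invertible $r \times r$ minor to the top-left you need \emph{independent} row and column permutations, not the same permutation applied to both (which is what ``simultaneous'' usually means); this still only relabels entries, so $\operatorname{trdeg}(A)$ is preserved and the argument is unaffected.
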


\begin{proof}
We have $A=BC$, where $B\in\mathcal{F}^{n\times r}$ and $C\in\mathcal{F}^{r\times n}$. Since the product $BC$ is preserved by the transformation $(B,C)\to(BD,D^{-1}C)$, we can assume that $C$ has a unit $r\times r$ submatrix.
\end{proof}

\begin{lem}\label{LemCla5}
Assume that a matrix $N$ over $\mathcal{F}$ has one of the forms $$N_1=\begin{pmatrix}F_1 & 0 &0\\ 0 & F_2 &F_3\\0 & F_4 &F_5\end{pmatrix},\,\,
N_2=\begin{pmatrix}F_6 & F_8 &F_0\\ 0 & F_7&F_9\\0&0&F_6\end{pmatrix},$$ where $F_j\in\Mat_{p}(\mathcal{F})$. If $\operatorname{rank}(N)=p$, then $\operatorname{trdeg}(N)\leqslant 3.75p^2$.
\end{lem}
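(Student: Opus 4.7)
The plan is to handle the two forms of $N$ separately.

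For $N=N_1$, the block-diagonal structure gives $\operatorname{rank}(N_1)=\operatorname{rank}(F_1)+\operatorname{rank}\bigl(\begin{smallmatrix}F_2 & F_3\\ F_4 & F_5\end{smallmatrix}\bigr)=p$. Setting $r_1=\operatorname{rank}(F_1)$ and applying Lemma~\ref{ObsCla1} to each block (a $p\times p$ block of rank $r_1$ and a $2p\times 2p$ block of rank $p-r_1$),
\[ \operatorname{trdeg}(N_1) \leqslant (2pr_1-r_1^2)+\bigl(4p(p-r_1)-(p-r_1)^2\bigr) = 3p^2-2r_1^2 \leqslant 3p^2, \]
comfortably below $3.75p^2$.

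For $N=N_2$, the key observation is that every entry of $N_2$ already occurs in its top $2p$ rows $M'=\bigl(\begin{smallmatrix}F_6 & F_8 & F_0\\ 0 & F_7 & F_9\end{smallmatrix}\bigr)$, because the copy of $F_6$ in the $(3,3)$-block of $N_2$ simply repeats the entries of the $(1,1)$-block. Hence $\operatorname{trdeg}(N_2)=\operatorname{trdeg}(M')$, and $q:=\operatorname{rank}(M')\leqslant\operatorname{rank}(N_2)=p$ since $M'$ is a submatrix. One then bounds $\operatorname{trdeg}(M')$ by the factorization method of Lemma~\ref{ObsCla1}, with additional savings drawn from the zero $(2,1)$-block of $M'$.

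Write $M'=BC$ with $B\in\Mat_{2p\times q}(\mathcal{F})$ and $C\in\Mat_{q\times 3p}(\mathcal{F})$, and split $B=\bigl(\begin{smallmatrix}B_T\\ B_B\end{smallmatrix}\bigr)$, $C=(C_1,C_2,C_3)$ in the induced block structure. The vanishing of the $(2,1)$-block gives $B_B C_1=0$. Set $s=\operatorname{rank}(C_1)$; then the rows of $B_B$ lie in the $(q-s)$-dimensional left null space of $C_1$. A dimension count on the stratum with $\operatorname{rank}(C_1)=s$---contributing $s(q+p-s)$ for $C_1$, $p(q-s)$ for $B_B$, and $pq$, $qp$, $qp$ for the unconstrained $B_T$, $C_2$, $C_3$---yields a variety of dimension $4pq+s(q-s)$. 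Quotienting by the $\operatorname{GL}_q$-action on factorizations (generic stabilizer trivial, since a generic $B$ has full column rank $q$) yields
\[ \operatorname{trdeg}(M') \leqslant 4pq+s(q-s)-q^2 \leqslant 4p^2-\tfrac{3p^2}{4} = \tfrac{13p^2}{4} < \tfrac{15p^2}{4} = 3.75p^2, \]
using $s(q-s)\leqslant q^2/4$ and $q\leqslant p$.

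The main obstacle is the dimension count in Case $N_2$: one must justify the stratification by $s$ and the triviality of the generic $\operatorname{GL}_q$-stabilizer so that the quotient-by-orbit-dimension argument rigorously bounds $\operatorname{trdeg}(M')$, and then correctly maximize $4pq+s(q-s)-q^2$ over all admissible $s\leqslant q\leqslant p$.
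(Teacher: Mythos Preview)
Your treatment of $N_1$ matches the paper's exactly.

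For $N_2$ your route is correct in outline but substantially more elaborate than the paper's. The paper does not pass to the top $2p$ rows or stratify factorizations by $\operatorname{rank}(C_1)$. Instead it simply partitions the entries of $N_2$ into (i) the entries of $F_6$ and (ii) the entries of the upper-right $2p\times 2p$ block $\bigl(\begin{smallmatrix}F_8&F_0\\F_7&F_9\end{smallmatrix}\bigr)$, bounding each part by a direct application of Lemma~\ref{ObsCla1}. For (i), the key observation is $\operatorname{rank}(F_6)\leqslant p/2$: the block-triangular submatrix $\bigl(\begin{smallmatrix}F_6&F_0\\0&F_6\end{smallmatrix}\bigr)$ of $N_2$ has rank at least $2\operatorname{rank}(F_6)$ and at most $\operatorname{rank}(N_2)=p$. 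Lemma~\ref{ObsCla1} then gives $\operatorname{trdeg}(F_6)\leqslant 0.75p^2$. For (ii), the $2p\times 2p$ block is a submatrix of $N_2$, hence has rank at most $p$, and Lemma~\ref{ObsCla1} gives $\operatorname{trdeg}\leqslant 3p^2$. Summing yields $3.75p^2$.

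Your argument buys a sharper constant ($3.25p^2$ versus $3.75p^2$), at the cost of the stratification-and-quotient machinery you yourself flag as the main obstacle. That machinery can indeed be made rigorous: your $s$ is the intrinsic quantity $\operatorname{rank}(F_6)$ (since $B$ has full column rank, $\operatorname{rank}(C_1)$ equals the rank of the first $p$ columns of $M'$), and the fibers of $(B,C)\mapsto BC$ over matrices of rank exactly $q$ are precisely the free $\operatorname{GL}_q$-orbits. But for the bound actually claimed, the paper's two applications of Lemma~\ref{ObsCla1} are cleaner and sidestep the dimension-theoretic subtleties entirely.
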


\begin{proof}
1. If $F_1$ has rank $k$, then the bottom right $2p\times 2p$ submatrix of $N_1$ has rank $p-k$; by Lemma~\ref{ObsCla1}, we get $\operatorname{trdeg}(N_1)\leqslant 2pk-k^2+4p(p-k)-(p-k)^2=3p^2-2k^2\leqslant 3p^2$.

2. We have $\operatorname{rank}\,F_6\leqslant p/2$, and Lemma~\ref{ObsCla1} shows that $\operatorname{trdeg}(F_6)\leqslant 0.75p^2$. Finally, the upper right $2p\times 2p$ submatrix of $N_2$ has rank at most $p$, so its transcendence degree does not exceed $3p^2$ again by Lemma~\ref{ObsCla1}.
\end{proof}

%


\begin{lem}\label{LemGant}
Let $\mathbb{F}$ be a field over which polynomials of degrees $1,\ldots,d-1$ have roots. If a matrix $M\in\Mat_n(\mathbb{F})$ has no eigenvalue in $\mathbb{F}$, then $\dim\mathcal{C}(M)\leqslant n^2/d$.
\end{lem}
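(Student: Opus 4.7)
The plan is to combine Theorem~\ref{LemCla6} with a purely combinatorial inequality on the tuple $(n_1,\ldots,n_k)$ of invariant-factor degrees of $M$. First I would translate the field-theoretic hypothesis into a lower bound on each $n_i$: if some irreducible factor $p(t)\in\mathbb{F}[t]$ of the characteristic polynomial had degree strictly less than $d$, then by hypothesis $p$ would have a root in $\mathbb{F}$; irreducibility would force $p$ to be linear, contradicting the assumption that $M$ has no eigenvalue in $\mathbb{F}$. Hence every irreducible factor has degree at least $d$, and since each invariant factor is a product of such polynomials, we obtain $n_i\geqslant d$ for all $i=1,\ldots,k$ (and in particular $kd\leqslant n$).

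By Theorem~\ref{LemCla6}, it then suffices to establish the numerical inequality
$$n_1+3n_2+5n_3+\ldots+(2k-1)n_k\;\leqslant\;\frac{n^2}{d}$$
under the constraints $n_1\geqslant n_2\geqslant\ldots\geqslant n_k\geqslant d$ and $\sum_i n_i=n$. The crucial observation I would use is the identity
$$\sum_{i=1}^{k}(2i-1)\,n_i\;=\;\sum_{i,j=1}^{k}\min(n_i,n_j),$$
which holds because $\min(n_i,n_j)=n_{\max(i,j)}$ for the decreasing sequence $(n_i)$, and for each $m$ there are exactly $m^2-(m-1)^2=2m-1$ pairs $(i,j)$ with $\max(i,j)=m$.

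The bound then follows termwise: for any pair $(i,j)$ we have $\min(n_i,n_j)\cdot d\leqslant\min(n_i,n_j)\cdot\max(n_i,n_j)=n_in_j$ since $d\leqslant n_k\leqslant\max(n_i,n_j)$, so summing yields
$$\sum_{i,j=1}^{k}\min(n_i,n_j)\;\leqslant\;\frac{1}{d}\sum_{i,j=1}^{k}n_in_j\;=\;\frac{1}{d}\Bigl(\sum_{i=1}^{k}n_i\Bigr)^{\!2}=\frac{n^2}{d}.$$
There is no real obstacle here; the one step that requires any cleverness is recognising the identity expressing $\sum(2i-1)n_i$ as a double sum of minima, which is precisely what lets the uniform lower bound $n_k\geqslant d$ convert into the quadratic bound $n^2/d$.
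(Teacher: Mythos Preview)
Your proof is correct and follows essentially the same approach as the paper: both deduce $n_i\geqslant d$ from the hypothesis and then bound $\sum_i(2i-1)n_i$ by $n^2/d$ using that lower bound together with Theorem~\ref{LemCla6}. The only cosmetic difference is that you package the combinatorial step via the identity $\sum_i(2i-1)n_i=\sum_{i,j}\min(n_i,n_j)$, whereas the paper normalises by $d$ and bounds the coefficients $(2i-1)$ directly; the underlying inequality is the same.
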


\begin{proof}
Let $n_1\geqslant\ldots\geqslant n_k$ be the degrees of the invariant factors. Since $M$ has no eigenvalue in $\mathbb{F}$, we have $n_i\geqslant d$ for all $i\in\{1,\ldots,k\}$. Denoting $\delta_i=n_i/d$, we get $\delta_i\geqslant1$ and also $\delta_1+\ldots+\delta_k=n/d$. Now we have $2i+1\leqslant 2\delta_1+\ldots+2\delta_{i}+1$ for all $i$, and Theorem~\ref{LemCla6} implies $\dim\mathcal{C}(M)/d=\delta_1+3\delta_2+\ldots+(2k-1)\delta_k$, which is $\leqslant \delta_1+(2\delta_1+1)\delta_2+(2\delta_1+2\delta_2+1)\delta_3+\ldots+(2\delta_1+\ldots+2\delta_{k-1}+1)\delta_k=\delta_1+\ldots+\delta_k+2\sum_{i<j}\delta_i\delta_j\leqslant (\delta_1+\ldots+\delta_k)^2=n^2/d^2$. 
\end{proof}

\begin{lem}\label{LemCla2}
Let $N\in\mathcal{C}(\mathcal{G})$ be a non-scalar matrix, where $\mathcal{G}$ is the matrix from Claim~\ref{ClaDisF2}. If $\dim\mathcal{C}(N)>3p^2$, then

\noindent (1) there is a non-scalar matrix $M\in\mathcal{C}(\mathcal{G})$ such that $\mathcal{C}(N)\subset\mathcal{C}(M)$ and either $M^2=M$ or $M^2=0$;

\noindent (2) there is a matrix $U\in\mathcal{C}(\mathcal{G})$ such that $\mathcal{C}(UNU^{-1})$ is contained in the centralizer of one of the matrices
$$M_1=\begin{pmatrix}I & 0 &0\\ 0 & 0&0\\0&0&0\end{pmatrix},\,\,
M_2=\begin{pmatrix}0 & 0 & I\\ 0 & 0&0\\0&0&0\end{pmatrix},$$
where every block has size $p\times p$.
\end{lem}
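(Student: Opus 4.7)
I will work inside the isomorphic copy $\mathcal{C}(\mathcal{G})\cong\Mat_3(\mathbb{K})$, where $\mathbb{K}:=\mathcal{F}[G]$ is (by Corollary~\ref{ThrHJ} and the irreducibility of $\varphi$) a field of degree $p$ over $\mathcal{F}$; the identification sends a block matrix with $p\times p$ blocks in $\mathcal{F}[G]$ to a $3\times 3$ matrix over $\mathbb{K}$. Under this dictionary the matrices $M_1$ and $M_2$ of the statement correspond to the matrix units $E_{11}$ and $E_{13}$ of $\Mat_3(\mathbb{K})$. Let $N'\in\Mat_3(\mathbb{K})$ denote the image of $N$.

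The first step will be to produce an eigenvalue of $N$ inside $\mathcal{F}$. By Theorem~\ref{CorIntExt} the field $\mathcal{F}$ admits no extension of degree $2$, so every polynomial over $\mathcal{F}$ of degree $\leqslant 2$ has a root in $\mathcal{F}$; hence Lemma~\ref{LemGant} is applicable with $d=3$, and the absence of an $\mathcal{F}$-eigenvalue would give $\dim\mathcal{C}(N)\leqslant(3p)^{2}/3=3p^{2}$, a contradiction. For each $\mathcal{F}$-eigenvalue $a$ of $N$, let $r_a$ denote the $\mathbb{K}$-rank of $N'-aI_3$, so $r_a\in\{1,2\}$ (the value $r_a=0$ would make $N$ scalar). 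The decisive step will be to show that some $a$ realises $r_a=1$.

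Assume, for contradiction, that $r_a=2$ for every $\mathcal{F}$-eigenvalue $a$. Then $N'$ has a single $\mathbb{K}$-Jordan block of some size $m_a$ at each such $a$, so the generalized $a$-eigenspace $V_a\subset\mathcal{F}^{3p}$ has dimension $pm_a$ and is stratified by $p$ Jordan blocks of size $m_a$; applying Theorem~\ref{LemCla6} gives $\dim\mathcal{C}(N|_{V_a})=m_ap^{2}$. Bundling all remaining root spaces into the $N$-invariant complement $V^\perp$ of dimension $p(3-m)$ where $m=\sum_a m_a$, the restriction $N|_{V^\perp}$ has no $\mathcal{F}$-eigenvalue, so Lemma~\ref{LemGant} gives $\dim\mathcal{C}(N|_{V^\perp})\leqslant p^{2}(3-m)^{2}/3$. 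Summing the two contributions yields $\dim\mathcal{C}(N)\leqslant p^{2}\bigl(m+(3-m)^{2}/3\bigr)$, which one checks to be $\leqslant 3p^{2}$ for every $m\in\{0,1,2,3\}$, contradicting the hypothesis.

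With $r_a=1$ in hand I write $P'=N'-aI_3=uv^{\top}$ (with $u,v\in\mathbb{K}^3$ nonzero) and set $c=v^{\top}u\in\mathbb{K}$, so $(P')^{2}=cP'$. When $c=0$ I take $M=N-aI$, giving $M^{2}=0$ and $\mathcal{C}(N)=\mathcal{C}(M)$; when $c\neq 0$ I let $M$ correspond to the rank-one idempotent $c^{-1}P'\in\Mat_3(\mathbb{K})$, which is the $\mathcal{F}$-linear projector onto the generalized eigenspace of $N$ at $a+c$ along the generalized $a$-eigenspace, so every $T\in\mathcal{C}(N)$ respects this decomposition and commutes with $M$. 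In either case $M\in\mathcal{C}(\mathcal{G})$ is non-scalar of rank $p$ and satisfies $M^{2}\in\{M,0\}$, yielding part~(1). For part~(2), any rank-one nilpotent (resp.\ idempotent) in $\Mat_3(\mathbb{K})$ is $\operatorname{GL}_3(\mathbb{K})$-conjugate to $E_{13}$ (resp.\ $E_{11}$); lifting the conjugator to $U\in\mathcal{C}(\mathcal{G})$ sends $\mathcal{C}(UNU^{-1})$ into $\mathcal{C}(M_2)$ or $\mathcal{C}(M_1)$. The main obstacle will be the dimension accounting in the third paragraph, where the clean bound $m+(3-m)^{2}/3\leqslant 3$ on $\{0,1,2,3\}$ is tight and packages both the rational and irrational parts of the spectrum at once.
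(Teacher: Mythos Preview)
Your argument is correct and establishes the lemma, but it takes a genuinely different route from the paper's proof. Both proofs begin identically: identify $\mathcal{C}(\mathcal{G})$ with $\Mat_3(\mathbb{K})$ for $\mathbb{K}=\mathcal{F}[G]$, and use Lemma~\ref{LemGant} with $d=3$ (legitimate by Theorem~\ref{CorIntExt}) to force an $\mathcal{F}$-eigenvalue of $N$. From there the paths diverge. The paper simply picks \emph{any} $\mathcal{F}$-eigenvalue $\lambda$, sets $L=N-\lambda I$, and splits into two cases: if $L$ is nilpotent (as a $3\times3$ matrix over $\mathbb{K}$) then $L$ or $L^{2}$ is the desired square-zero $M$; if not, the Fitting decomposition of $L^{2}$ produces a nontrivial idempotent polynomial in $L^{2}$. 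You instead run a second, more refined dimension count---decomposing $\mathcal{F}^{3p}$ into the $\mathcal{F}$-primary pieces $V_a$ (each carrying $p$ Jordan blocks of size $m_a$, hence centralizer dimension $m_ap^{2}$) and a residual $V^{\perp}$ with no $\mathcal{F}$-eigenvalue---to show that some $\mathcal{F}$-eigenvalue $a$ must satisfy $r_a=1$. With a rank-one $P'=N'-aI_3$ in hand, $M$ is then written down explicitly as either $P'$ or $c^{-1}P'$.

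What each approach buys: the paper's Fitting argument is shorter and sidesteps the bookkeeping of your third paragraph entirely. Your argument requires that extra step but in return yields a more concrete $M$: it is always obtained from a rank-one (over $\mathbb{K}$) shift of $N$, rather than a polynomial in $L^{2}$ extracted from a normal form. One small wording point: in the $c\neq0$ case you describe $M$ as the projector onto the generalized eigenspace ``at $a+c$'', but $a+c$ may lie in $\mathbb{K}\setminus\mathcal{F}$; the cleaner justification is that $M$ and $N-aI$ differ by the invertible factor $C\in\mathcal{C}(\mathcal{G})$ corresponding to $c$, hence share kernel and image, so any $T\in\mathcal{C}(N)$ preserves $\ker M=\ker(N-aI)$ and $\operatorname{im}M=\operatorname{im}(N-aI)$ and therefore commutes with the idempotent $M$ (equivalently, $M$ is the spectral projector in the $\mathcal{F}$-primary decomposition of $N$, hence an $\mathcal{F}$-polynomial in $N$). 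With that adjustment the argument is complete.
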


\begin{proof}
One can note that the elements of $\mathcal{C}(\mathcal{G})$ are the 
$3p \times 3p$ matrices such that, when they are written as $3\times3$ arrays of $p\times p$ blocks, all of those $p\times p$ blocks commute with $G$. By Corollary~\ref{ThrHJ}, these blocks are actually elements of $\mathcal{F}[G]$; in other words, we can think of $\mathcal{C}(\mathcal{G})$ as the set of $3\times 3$ matrices over the field $\mathcal{F}[G]$.

If the assertion (1) of the lemma is true, we can compute the Jordan normal form of $M$ as a $3\times3$ matrix over $\mathcal{G}$. If $M^2=0$, then the Jordan normal form of $M$ is the $M_2$ of statement (2); if $M^2=M$, then $\mathcal{C}(M)=\mathcal{C}(I-M)$ and either $M$ or $I-M$ has Jordan normal form equal to $M_1$. Now it suffices to prove (1). 

If $N$ has no eigenvalue in $\mathcal{F}$, then Lemma~\ref{LemGant} with $d=3$ implies $\dim\mathcal{C}(N)\leqslant3p^2$, which is a contradiction. (Lemma~\ref{LemGant} is applicable because every degree two polynomial over $\mathcal{F}$ is reducible by Theorem~\ref{CorIntExt}.) We see that $N$ has an eigenvalue $\lambda\in\mathcal{F}$, and then $L=N-\lambda I$ is non-invertible and $\mathcal{C}(N)=\mathcal{C}(L)$. If $L$ is nilpotent, then either $L$ or $L^2$ is a nonzero square-zero matrix, and we have $\mathcal{C}(L)\subset\mathcal{C}(L^2)$.

If $L$ is not nilpotent, then we construct the rational normal form of $L^2$. That is, we find a matrix $U\in\mathcal{C}(\mathcal{G})$ such that $UL^2U^{-1}=\left(\begin{smallmatrix}0&0\\0&L'\end{smallmatrix}\right)$, where $L'$ is either a $p\times p$ or $2p\times 2p$ invertible matrix. The matrix $L_0$ obtained by replacing $L'$ with the unit matrix of the relevant size is a polynomial in $UL^2U^{-1}$, so we get $L_0=L_0^2$ and $\mathcal{C}(ULU^{-1})\subset\mathcal{C}(UL^2U^{-1})\subset\mathcal{C}(L_0)$.
\end{proof}

\begin{lem}\label{LemCHT}
For any $U\in\mathcal{C}(\mathcal{G})$, we have $\operatorname{trdeg}(U)\leqslant 9p$.
\end{lem}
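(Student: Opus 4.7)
The plan is to use the structural description of $\mathcal{C}(\mathcal{G})$ already exploited in the proof of Lemma~\ref{LemCla2}: by Corollary~\ref{ThrHJ}, every $p\times p$ block of a matrix in $\mathcal{C}(\mathcal{G})$ (viewed as a $3\times 3$ array of $p\times p$ blocks) lies in $\mathcal{F}[G]$. Since $\varphi$ has degree $p$, the algebra $\mathcal{F}[G]$ has dimension $p$ over $\mathcal{F}$, with basis $I, G, G^2, \ldots, G^{p-1}$. Consequently, any $U\in\mathcal{C}(\mathcal{G})$ is determined by the $9p$ scalars $c_{ij,k}\in\mathcal{F}$ ($1\leq i,j\leq 3$, $0\leq k\leq p-1$) that express each of its nine blocks as an $\mathcal{F}$-linear combination of $I,G,\ldots,G^{p-1}$.

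To conclude, I would observe that $G$ is the companion matrix of $\varphi$, whose coefficients $a_{p-1}$, $\ldots$, $a_2$, $a_1+y$, $a_0+z$ all lie in $\mathbb{F}_3[y,z]$; hence every entry of every power $G^k$ lies in $\mathbb{F}_3(y,z)$. Therefore every entry of $U$ is an $\mathbb{F}_3(y,z)$-linear combination of the $9p$ scalars $c_{ij,k}$, so the field obtained from $\mathbb{F}_3(y,z)$ by adjoining the entries of $U$ is contained in $\mathbb{F}_3(y,z)(\{c_{ij,k}\})$, which has transcendence degree at most $9p$ over $\mathbb{F}_3(y,z)$. This gives $\operatorname{trdeg}(U)\leq 9p$. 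I do not expect any substantive obstacle: the argument is essentially a dimension count, and the only subtlety worth stating is that the base field for $\operatorname{trdeg}(\cdot)$ is $\mathbb{F}_3(y,z)$ (by the convention fixed at the start of this section), so the entries of the powers of $G$, being already rational in $y$ and $z$, contribute nothing to the degree.
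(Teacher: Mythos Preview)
Your proof is correct and follows essentially the same approach as the paper: both arguments invoke the block description of $\mathcal{C}(\mathcal{G})$ from the proof of Lemma~\ref{LemCla2}, write each of the nine blocks as an $\mathcal{F}$-linear combination of $I,G,\ldots,G^{p-1}$ via Cayley--Hamilton, and use that the entries of $G$ (and hence of all $G^k$) already lie in $\mathbb{F}_3(y,z)$ so that the $9p$ scalar coefficients bound the transcendence degree. Your write-up is in fact a touch more explicit than the paper's about the convention for the base field, which is the only subtlety here.
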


\begin{proof}
By the Cayley--Hamilton theorem, the matrix $G^p$ (where $G$ is as in Claim 3.3) is an $\mathcal{F}$-linear combination of matrices $I,G,\ldots,G^{p-1}$. In other words, any matrix $P\in\mathcal{F}[G]$ can be written as $\lambda_0 I+\lambda_1 G+\ldots+\lambda_{p-1} G^{p-1}$ with $\lambda_i\in\mathcal{F}$. Since $G$ is a companion matrix of a polynomial with coefficients in $\mathbb{F}_3[x,y]$, we get $\operatorname{trdeg}(G)=0$ and $\operatorname{trdeg}(P)\leqslant p$. As explained in the proof of Lemma~\ref{LemCla2}, matrices in $\mathcal{C}(\mathcal{G})$ are $3p \times 3p$ matrices such that, when they are written as $3 \times 3$ arrays of $p \times p$ blocks, all of those nine blocks belong to $\mathcal{F}[G]$.
\end{proof}

We are now ready to complete the proof of Claim~\ref{ClaDisF2}.

\begin{proof}[Proof of Claim~\ref{ClaDisF2}]
Suppose
$\mathcal{G}\leftrightarrow S\leftrightarrow X^{-1}TX\leftrightarrow X^{-1}\mathcal{G} X$ is a path in $\Gamma(\mathcal{F},3p)$. By Corollary~\ref{CorCla1} with $X$, $T$, $S$ in the roles of $C$, $A$, $D$, respectively, we have $\operatorname{trdeg}(X)\leqslant\operatorname{trdeg}(T)+\operatorname{trdeg}(S)+\dim\mathcal{C}(T)+\dim\mathcal{C}(S)$. Lemma~\ref{LemCHT} implies $\operatorname{trdeg}(T)+\operatorname{trdeg}(S)\leqslant 18p$, so that $\dim\mathcal{C}(T)+\dim\mathcal{C}(S)\geqslant 9p^2-18p>8p^2$ (since $p>18$). 

Thus, at least one of $\dim\mathcal{C}(S)$, $\dim\mathcal{C}(T)$ must be $>3p^2$. Suppose $\dim\mathcal{C}(S)> 3p^2$. Note that matrices commuting with $M_1$, $M_2$ as in Lemma~\ref{LemCla2} have the forms $N_1$, $N_2$ as in Lemma~\ref{LemCla5}, respectively. Therefore, Lemma~\ref{LemCla2} implies $\dim\mathcal{C}(S)\leqslant 5p^2$, which implies $\dim\mathcal{C}(T)>3p^2$ by applying the conclusion of the above paragraph. If, instead of $\dim\mathcal{C}(S)> 3p^2$, we assume $\dim\mathcal{C}(T)>3p^2$, then we similarly get $\dim\mathcal{C}(S)> 3p^2$. So both these bounds hold.

Since we can replace $S$ and $T$ by any non-scalar matrices with the same or larger centralizers, Lemma~\ref{LemCla2} allows us to assume that $S=UM_iU^{-1}$ and $T=V^{-1}M_jV$, for some $i,j\in\{1,2\}$ and $U,V\in\mathcal{C}(\mathcal{G})$. Since any matrix commuting with $M_i$ has the form $N_i$ as in Lemma~\ref{LemCla5}, we get $X^{-1}TX=UN_iU^{-1}$, which implies $N_i=(VXU)^{-1}M_j(VXU)$; we have
\begin{equation}\label{eqCla}\operatorname{trdeg}(VXU)\leqslant\operatorname{trdeg}(N_i)+\operatorname{trdeg}(M_j)+\dim\mathcal{C}(M_j)\end{equation}
by Lemma~\ref{LemCla1}. Since $\operatorname{rank} N_i=\operatorname{rank} T=\operatorname{rank} M_j=p$, Lemma~\ref{LemCla5} implies $\operatorname{trdeg}(N_i)\leqslant 3.75p^2$; we also get $\operatorname{trdeg}(M_j)=0$, $\dim\mathcal{C}(M_j)=5p^2$ straightforwardly. Finally, we note that $\operatorname{trdeg}(X)\leqslant\operatorname{trdeg}(VXU)+\operatorname{trdeg}(V^{-1})+\operatorname{trdeg}(U^{-1})$, and Lemma~\ref{LemCHT} implies $\operatorname{trdeg}(V^{-1})+\operatorname{trdeg}(U^{-1})\leqslant 18p$. We get $\operatorname{trdeg}(VXU)\geqslant 9p^2-18p$, so that~\eqref{eqCla} implies $9p^2-18p\leqslant 3.75p^2+0+5p^2$, i.e., $0.25p^2\leqslant18p$. Since $p\geqslant73$, this is a contradiction.
\end{proof}

The proof of the main result is now complete. Theorems~\ref{CorIntExt} and~\ref{ThrDistSix} show that $\Gamma(\mathcal{F},3p)$ is a connected graph with diameter greater than five, which disproves Conjecture~\ref{ConjConn5}.
The distances in commuting graphs of matrix algebras cannot exceed six as Theorem~17 of~\cite{AMRP} shows, so the diameter of $\Gamma(\mathcal{F},3p)$ equals six.

%

\bigskip

The idea of our construction comes from the paper~\cite{Lipman}, which contains the proof of a statement similar to Lemma~\ref{LemNoIntExt}. I would like to thank the anonymous reviewers for their interest to this project, careful reading of the preliminary versions, and numerous helpful suggestions. I am grateful to Alexander Guterman for interesting discussions on this topic.

\end{document}